\numberwithin{equation}{section}
\theoremstyle{plain}
\newtheorem{Th}{Theorem}[section]
\newtheorem{Lemma}[Th]{Lemma}
\newtheorem{Cor}[Th]{Corollary}
\newtheorem{Prop}[Th]{Proposition}
\theoremstyle{definition}
\newtheorem{Def}[Th]{Definition}
\newtheorem{Rem}[Th]{Remark}
\newtheorem{?}[Th]{Problem}
\def\al{\alpha}
\def\d{\text{d}}
\def\w{\wedge}
\def\R{\mathbb{R}}
\def\C{\mathbb{C}}
\def\Q{\mathbb{H}}
\def\Lm{\Lambda}
\def\lm{\lambda}
\def\om{\omega}
\def\Om{\Omega}
\def\vp{\varphi}
\def\y{\\[2pt]}
\def\Spin{\mathit{Spin}\,}
\def\ip{\raise1pt\hbox{\large$\lrcorner$}\>}
\def\pd#1{\frac\partial{\partial#1}}
\DeclareMathOperator{\ds}{ds}
\DeclareMathOperator{\vol}{vol}
\begin{document}
	
\title{$S^1$-quotient of $Spin(7)$-structures }

\author[U. Fowdar]{Udhav Fowdar}
	
\address{University College London \\ Department of Mathematics \\
Gower Street \\
WC1E  6BT\\
London \\
UK}
	
\email{udhav.fowdar.12@ucl.ac.uk}
		
%\subjclass[2010]{Primary: 05C??. Secondary: 05C??}
	
\keywords{Differential Geometry, Exceptional holonomy, $S^1$-quotient, $G_2$-structures, $Spin(7)$-structures.} 
\subjclass[2010]{53C10, 53C29}

\begin{abstract} 
If a $Spin(7)$-manifold $N^8$ admits a free $S^1$ action preserving the fundamental $4$-form then the quotient space $M^7$ is naturally endowed with a $G_2$-structure. We derive equations relating the intrinsic torsion of the $Spin(7)$-structure to that of the $G_2$-structure together with the additional data of a Higgs field and the curvature of the $S^1$-bundle; this can be interpreted as a Gibbons-Hawking type ansatz for $Spin(7)$-structures. In particular, we show that if $N$ is a $Spin(7)$ manifold then $M$ cannot have holonomy contained in $G_2$ unless the $N$ is in fact a Calabi-Yau $4$-fold and $M$ is the product of a Calabi-Yau $3$-fold and an interval. By inverting this construction we give examples of $SU(4)$ holonomy metrics starting from torsion free $SU(3)$-structures. We also derive a new formula for the Ricci curvature of $Spin(7)$-structures in terms of the torsion forms. We then describe this $S^1$-quotient construction in detail on the Bryant-Salamon $Spin(7)$-metric on the spinor bundle of $S^4$ and on flat $\R^8$.
\end{abstract}
	
\maketitle
\section{Introduction}

In $1955$, Berger classified the possible holonomy groups of irreducible, nonsymmetric, simply connected Riemannian manifolds \cite{Berger}. The classification included the two exceptional cases of holonomy groups: $G_2$ and $Spin(7)$, of which no examples were known at the time. It is only in $1987$ that Bryant proved the existence of local examples in \cite{Bryant1987} and subsequently explicit complete non-compact examples were constructed by Bryant and Salamon in \cite{Bryant1989}. There are by now many known examples of holonomy $G_2$ and $Spin(7)$ metrics cf. \cite{Joycebook}, \cite{Brandhuber01}, \cite{Bazaikin2013}, \cite{Foscolo2015a}, \cite{Foscolo2019}, yet very few explicitly known ones. In \cite{Apostolov2003}, Apostolov and Salamon studied the $S^1$-reduction of $G_2$-manifolds and investigated the situation when the quotient is a K\"ahler manifold. By inverting their construction, they were able to give several local examples of holonomy $G_2$ metrics starting from a K\"ahler $3$-fold with additional data. Motivated by their work, in this article we shall carry out the analogous construction in the $Spin(7)$ setting, but more generally we shall look at $S^1$-invariant $Spin(7)$-structures which are not necessarily torsion free. The situation when $N$ is a $Spin(7)$ manifold has also been studied by Foscolo in \cite{Foscolo2019}. One motivation for studying the non-torsion free cases lies in the fact that they also have interesting geometric properties, for instance, balanced $Spin(7)$-structures admit harmonic spinors \cite{Ivanovspin7} and compact locally conformally parallel are fibred by nearly parallel $G_2$ manifolds \cite{Ivanovetal05}. A further motivation is that $Spin(7)$-structures have only two torsion classes and thus have only four types whereas $G_2$-structures have four classes, thus allowing for a more refined decomposition of the $Spin(7)$ torsion classes. The outline for the rest of this article is as follows.

In section $2$ we give a brief introduction to $G_2$ and $Spin(7)$-structures and set up some notation. The reader will find proofs of the mentioned facts in the standard references \cite{Bryant1987}, \cite{Salamon1989} and \cite{Joycebook}.

In section $3$ we describe the quotient of $Spin(7)$-structures which are invariant under a free circle action. The foundational result is proposition \ref{keytheorem}, which gives explicit expressions relating the torsion of the $Spin(7)$-structure on the 8-manifold $N$ to the torsion of the quotient $G_2$-structure on $M$ together with a positive function $s$ and the curvature of the $S^1$ bundle. The key observation is that this construction is reversible. In the subsequent subsections we specialise to the three cases when the $Spin(7)$-structure is torsion free, locally conformally parallel and balanced. In the torsion free situation we show that quotient manifold cannot have holonomy \textit{equal to} $G_2$ unless $N$ is a Calabi-Yau $4$-fold and $M$ is the Riemannian product of a  Calabi-Yau $3$-fold and a circle. We also give explicit expressions for the $SU(4)$-structure in terms of the data on the quotient manifold, see Theorem \ref{torsionfreequotient}. In the locally conformally parallel situation, we show that $M$ has vanishing $\Lm^3_{27}$ torsion component and furthermore, if the $\Lm^3_1$ torsion component is non-zero then $N=M\times S^1$, see Theorem \ref{locallyconformallyparallelquotient}. In the balanced situation, we show that the existence of an invariant $Spin(7)$-structure is equivalent to the existence of a suitable section of $\Lm^2_{14}$ of the quotient space, see Theorem \ref{balancedquotient}. We provide several examples to illustrate each case.

In section $4$ we derive formulae for the Ricci and scalar curvatures of $Spin(7)$-structures in terms of the torsion forms \`a la Bryant cf. \cite{Bryant06someremarks}, see Proposition \ref{ricciandscal}. As a corollary, under our free $S^1$ action hypothesis, we show that the $\Lm^2_7$ component of the curvature form corresponds to the mean curvature vector of the circle fibres. 

In the last two sections we demonstrate how our construction can be applied to the Bryant-Salamon $Spin(7)$-structure on the (negative) spinor bundle of $S^4$ and on the flat $Spin(7)$-structure on $\R^8.$ In the former case the quotient space is the anti-self-dual bundle of $S^4$ and in the latter it is the cone on $\C \mathbb{P}^3.$ We interpret the quotient of the spinor bundle as a fibrewise reverse Gibbons-Hawking ansatz. In both case we also study the $SU(3)$-structure on the link $\C \mathbb{P}^3$.
\section*{Acknowledgements}
The author would like to thank his supervisors Jason Lotay and Simon Salamon for their constant support and many helpful discussions that led to this article. This work was supported by the Engineering and Physical Sciences Research Council [EP/L015234/1]. The EPSRC Centre for Doctoral Training in Geometry and Number Theory (The London School of Geometry and Number Theory), University College London. 
\section{Preliminaries}
A $G_2$-structure on a $7$-manifold $M^7$ is given by a $3$-form $\vp$ that can be identified at each point $p\in M^7$ with the standard one on $\R^7$:
\begin{equation}
\vp_0=dx_{123}+dx_{145}+dx_{167}+dx_{246}-dx_{257}-dx_{347}-dx_{356} \end{equation}
where $dx_{ijk}$ denotes $dx_i \w dx_j \w dx_k$.
More abstractly it can equivalently be defined as a reduction of the structure group of the frame bundle of $M$ from $GL(7,\R)$ to $G_2$, but we shall use the former more concrete definition. The reason for this nomenclature is the fact that the subgroup of $GL(7,\R)$ which stabilises $\vp_0$ is isomorphic to the Lie group $G_2$. Since $G_2$ is a subgroup of $SO(7)$ \cite{Bryant1987} it follows that $\vp$ defines a Riemannian metric $g_\vp$ and volume form $vol_\vp$ on $M^7$. Explicitly these are given by
\[\frac{1}{6}\ \iota_{X}\vp\w \iota_{Y}\vp \w \vp = g_\vp(X,Y) \ vol_\vp.  \]
Thus $\vp$ also defines a Hodge star operator $*_\vp.$
It is known that a $7$-manifold admits a $G_2$-structure if and only if its first and second Stiefel-Whitney classes vanish \cite{LMspin} so there is a plethora of examples. One of the main motivations for studying this structure is that if $\vp$ is parallel with respect to the Levi-Civita connection $\nabla^{g_\vp}$ (which is a first order condition) then it has holonomy contained in $G_2$ and the metric is Ricci-flat. Such a manifold is called a $G_2$-manifold. Note that in contrast the Ricci-flat system of equations are second order. The fact that $\vp$ is parallel implies the reduction of the holonomy group from $SO(7)$ to (a subgroup of) $G_2$ and conversely, a holonomy $G_2$ metric implies the existence of such a $3$-form.  A useful alternative way to verify the parallel condition is given by the following theorem. 
\begin{Th}[\cite{Fernandez1982}]
$\nabla^{g_\vp}\vp=0$ if and only if $d\vp=0$ and $d*_\vp \vp=0.$
\end{Th}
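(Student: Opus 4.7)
The plan is to prove the two directions separately. The forward implication ($\nabla^{g_\vp}\vp=0 \Rightarrow d\vp = d*_\vp\vp = 0$) is essentially automatic: the Hodge star is built from the metric and orientation, both of which are parallel with respect to $\nabla^{g_\vp}$, so $\vp$ parallel forces $*_\vp\vp$ parallel as well, and any parallel form is killed by $d = \mathrm{Alt} \circ \nabla$. So the real content is the converse.

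For the converse, I would work through the intrinsic torsion of a $G_2$-structure. Because $G_2 \subset SO(7)$ with $\mathfrak{so}(7) = \mathfrak{g}_2 \oplus \mathfrak{g}_2^\perp$ and $\mathfrak{g}_2^\perp \cong \Lambda^1$ as a $G_2$-module, the covariant derivative $\nabla^{g_\vp}\vp$ is a section of $T^*M \otimes \Lambda^3_7 \cong T^*M \otimes T^*M$, which decomposes into four irreducible $G_2$-pieces of dimensions $1, 7, 14, 27$. Writing $\nabla^{g_\vp}\vp = \tau_0 + \tau_1 + \tau_2 + \tau_3$ with $\tau_0 \in \Omega^0$, $\tau_1 \in \Omega^1$, $\tau_2 \in \Omega^2_{14}$, $\tau_3 \in \Omega^3_{27}$, the claim $\nabla^{g_\vp}\vp = 0$ is equivalent to the simultaneous vanishing of these four torsion classes.

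Next I would show that $d\vp$ and $d *_\vp\vp$ together recover all four torsion components. Since $d$ is the antisymmetrisation of $\nabla^{g_\vp}$, these two exterior derivatives are $G_2$-equivariant algebraic combinations of $\nabla^{g_\vp}\vp$ landing in $\Lambda^4 = \Lambda^4_1 \oplus \Lambda^4_7 \oplus \Lambda^4_{27}$ and $\Lambda^5 = \Lambda^5_7 \oplus \Lambda^5_{14}$ respectively. An explicit computation in a local $G_2$-frame, or equivalently an application of Schur's lemma to the equivariant projections, yields the Fernández--Gray identities
\begin{equation*}
d\vp = \tau_0 \, *_\vp\vp + 3\,\tau_1 \wedge \vp + *_\vp\tau_3, \qquad d*_\vp\vp = 4\,\tau_1 \wedge *_\vp\vp + \tau_2 \wedge \vp,
\end{equation*}
where the numerical constants come from pairing a few test forms (for example, contracting with $\vp$ to pick out $\tau_0$ from the $\Lambda^4_1$ component). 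The only non-trivial check is that no irreducible summand in the decompositions of $\Lambda^4$ and $\Lambda^5$ pairs trivially with the corresponding torsion class; this follows because each $G_2$-module $1, 7, 14, 27$ appears with multiplicity one on both sides.

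Given these identities, $d\vp = 0$ forces $\tau_0 = 0$, $\tau_3 = 0$, and $\tau_1 \wedge \vp = 0$, and the map $\alpha \mapsto \alpha \wedge \vp$ from $\Omega^1$ to $\Omega^4_7$ is a $G_2$-module isomorphism, so $\tau_1 = 0$; then $d*_\vp\vp = 0$ reduces to $\tau_2 \wedge \vp = 0$, and again $\alpha \mapsto \alpha \wedge \vp$ from $\Omega^2_{14}$ to $\Omega^5_{14}$ is an isomorphism, so $\tau_2 = 0$. All four torsion classes vanish, hence $\nabla^{g_\vp}\vp = 0$. The main obstacle in this plan is pinning down the explicit coefficients in the Fernández--Gray formulas, which is a finite but somewhat tedious $G_2$-representation-theoretic bookkeeping exercise; everything else is formal once the decomposition is in hand.
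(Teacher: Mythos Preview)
Your proof is correct and follows the standard intrinsic-torsion argument. Note, however, that the paper does not actually prove this theorem: it is stated in the preliminaries with a citation to Fern\'andez--Gray \cite{Fernandez1982} and used as background. The torsion identities you quote,
\[
d\vp = \tau_0\,*_\vp\vp + 3\,\tau_1\wedge\vp + *_\vp\tau_3,\qquad d*_\vp\vp = 4\,\tau_1\wedge *_\vp\vp + \tau_2\wedge\vp,
\]
are exactly the paper's equations (\ref{g2torsion1}) and (\ref{g2torsion2}), so your argument dovetails with the surrounding exposition.

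One small remark on emphasis: for the converse you do not actually need the precise numerical coefficients $3$ and $4$; all that is required is that the $G_2$-equivariant antisymmetrisation maps from $T^*M\otimes\Lambda^3_7$ into $\Lambda^4$ and $\Lambda^5$ are injective on each irreducible summand. You identify this correctly as the ``only non-trivial check,'' but it is worth stressing that this injectivity (rather than the explicit constants) is the genuine content, and it can be verified by a single pointwise computation in the model $\R^7$ without chasing coefficients. Everything else in your outline is sound.
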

The failure of the reduction of the holonomy group to $G_2$ is measured by the intrinsic torsion. Abstractly, given a general $H$-structure for a subgroup $H \subset O(n)$ the intrinsic torsion is defined as a section of the associated bundle to $\R^n \otimes\mathfrak{h}^\perp$ where $\mathfrak{so}(n)=\mathfrak{h}\oplus\mathfrak{h}^\perp$ and $\perp$ denotes the orthogonal complement with respect to the Killing form. We shall only give a brief description here but more details can be found in \cite{Salamon1989} and \cite{Bryant06someremarks}. The space of differential forms on $M^7$ can be decomposed as $G_2$-modules as follows:
\begin{align*}
\Lm^1 &= \Lm^1_7\\
\Lm^2 &= \Lm^2_7 \oplus \Lm^2_{14}\\
\Lm^3 &= \Lm^3_1 \oplus \Lm^3_7 \oplus \Lm^3_{27}
\end{align*}
where the subscript denotes the dimension of the irreducible module. Using the Hodge star operator we get the corresponding splitting for $\Lm^4$, $\Lm^5$ and $\Lm^6.$ 
The intrinsic torsion is given by $\dim(\R^7 \otimes \mathfrak{g}_2^\perp)=49$ equations and can be described using the equations
\begin{gather}
d\vp=\tau_0*_\vp\vp +3\ \tau_1\w\vp +*_\vp\tau_{3} \label{g2torsion1}\\
d*_\vp\vp= 4\ \tau_1 \w *_\vp\vp+\tau_2\w\vp\label{g2torsion2}
\end{gather}
where $\tau_0 \in \Om^0$, $\tau_1\in\Om^1_7$, $\tau_2 \in \Om^2_{14}$ and $\tau_{3} \in \Om^4_{27}.$ Here we are denoting by $\Om^i_j$ the space of smooth sections of $\Lm^i_j.$ The fact that $\tau_1$ arises in both equations can be proved using the following.
\begin{Lemma}[\cite{Bryant06someremarks}]\label{usefulidentities}
Given $\al \in \Lm^1_7(M)$ and $\beta \in \Lm^2_7(M)$ we have
\begin{enumerate}
	\item $2*_\vp(\beta \w *_\vp \vp) \w *_\vp \vp = 3 \beta \w \vp$
	\item $*_\vp\al =-\frac{1}{4} *_\vp(\al \w \vp)\w\vp=\frac{1}{3}*_\vp(\al \w *_\vp \vp)\w *_\vp \vp.$
\end{enumerate}
\end{Lemma}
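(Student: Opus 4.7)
My plan is to apply Schur's lemma in the representation theory of $G_2$. All expressions in the lemma are built from $G_2$-equivariant, tensorial operations (Hodge star, wedge, interior product with $\vp$), so both identities are pointwise statements that reduce to a verification in the standard flat model $(\R^7, \vp_0)$ of Section 2. Since $\Lm^1 = \Lm^1_7$ and $\Lm^2_7$ are irreducible seven-dimensional $G_2$-modules, and since the targets $\Lm^6$ and $\Lm^5$ each contain exactly one copy of this irreducible, Schur's lemma forces each side of each identity to be a single scalar multiple of a fixed $G_2$-equivariant map. The proof is then reduced to determining these scalars by a single test calculation.

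For Part 2, each of $*_\vp\al$, $-\tfrac{1}{4}*_\vp(\al\w\vp)\w\vp$ and $\tfrac{1}{3}*_\vp(\al\w *_\vp\vp)\w *_\vp\vp$ is a $G_2$-equivariant map $\Lm^1_7 \to \Lm^6$. By Schur's lemma each is a scalar multiple of the Hodge star $*_\vp\colon \Lm^1_7 \to \Lm^6$. I would pin down the constants by taking $\al = dx_1$ in the standard model: compute $dx_1 \w \vp_0$, apply Hodge star, wedge the result with $\vp_0$, and compare with $*_\vp dx_1 = dx_{234567}$; the second equality is handled analogously with $*_\vp\vp_0$ in place of $\vp_0$.

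For Part 1, every $\beta \in \Lm^2_7$ can be written uniquely as $\beta = X \ip \vp$ for some vector field $X$, so $\beta$ depends $G_2$-equivariantly on a one-form. Both sides of the identity therefore define $G_2$-equivariant linear maps $\Lm^2_7 \to \Lm^5_7$, and Schur's lemma produces a single proportionality constant to determine. It is fixed by one test case, e.g., $\beta = \partial_1 \ip \vp_0 = dx_{23}+dx_{45}+dx_{67}$, in which both $\beta\w\vp_0$ and $\beta\w *_\vp\vp_0$ can be expanded directly from the explicit formula for $\vp_0$.

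The main obstacle is the low-level bookkeeping of orientations and signs in $\R^7$: writing down $*_\vp\vp_0$ explicitly, and then computing wedges and Hodge stars of higher-index products correctly. The Schur-lemma reduction is precisely what makes this manageable, collapsing each identity to a single scalar determination rather than a check on every element of a $49$-dimensional space. Alternatively, both parts follow in one line from the standard contraction relations for $\vp$ in \cite{Bryant06someremarks} together with the intermediate identities $(\al\ip\vp)\w *_\vp\vp = 3\,*_\vp\al$ and $(\al\ip *_\vp\vp)\w\vp = 4\,*_\vp\al$, each of which is itself a one-test-case application of Schur's lemma.
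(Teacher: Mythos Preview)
The paper does not supply its own proof of this lemma; it is simply quoted from \cite{Bryant06someremarks} and used as input for later computations. Your Schur-lemma argument is correct and is the standard way such pointwise $G_2$-identities are established: all operations involved are $G_2$-equivariant and tensorial, so one reduces to the flat model; both $\Lm^6\cong\Lm^6_7$ and $\Lm^5_7\subset\Lm^5$ contain exactly one copy of the $7$-dimensional irreducible, forcing each side to be a scalar multiple of a fixed equivariant map; and a single test vector such as $\al=dx_1$ or $\beta=\partial_1\ip\vp_0=dx_{23}+dx_{45}+dx_{67}$ pins down the constant. The only point to be slightly careful about in Part~1 is that a priori $\beta\w\vp$ lands in $\Lm^5=\Lm^5_7\oplus\Lm^5_{14}$, but since $\Lm^5_{14}$ is an irreducible $14$-dimensional module it admits no nonzero equivariant map from $\Lm^2_7$, so the image is automatically in $\Lm^5_7$ as you assert.
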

In contrast to the non-torsion free case, manifolds with holonomy \textit{equal to} $G_2$ are much harder to find. $G_2$-structures for which $\vp$ is (co-)closed are usually called (co-)calibrated. Another notion we shall need is that of a $G_2$-instanton. 
\begin{Def}\label{instanton}
	A $G_2$-instanton on a $G_2$ manifold $(M^7,\vp)$ is a connection $1$-form $A$ on a principal $G$-bundle whose curvature form $F_A$ satisfies
	\[F_A \w *_\vp\vp=0 ,\]
	Equivalently, $F_A$ belongs to $\Om^2_{14}$, as an $\mathrm{ad}(P)$-valued $2$-form on $M^7$.
\end{Def}
Instantons are solutions of the Yang-Mills equations and as such play an important role in studying topological properties of $M^7$. There is a similar geometric structure to $G_2$-structures in dimension eight, again related to exceptional holonomy.\\
A $Spin(7)$-structure on an $8$-manifold $N^8$ is given by a $4$-form $\Phi$ that can be identified at each point $q\in N^8$ with the standard one on $\R^8$:
\begin{equation}\label{equationspin7}
\Phi_0=dx_0 \w \vp_0 + *_{\vp_0} \vp_0 \end{equation}
where we have augmented the $G_2$ module $\R^7$ by $\R$ with coordinate $x_0.$ 
The subgroup of $GL(8,\R)$ which stabilises $\Phi_0$ is isomorphic to $Spin(7)$ cf. \cite{Bryant1989, Salamon1989}.
From this definition it is clear that $G_2$ arises as a subgroup of $Spin(7).$ Since $Spin(7)$ is a subgroup of $SO(8)$ it follows that $\Phi$ defines a metric $g_\Phi$, volume form $vol_\Phi$ and Hodge star $*_\Phi.$ Explicitly the volume form is given by
\[vol_\Phi=\frac{1}{14}\ \Phi \w \Phi \]
but the expression for $g_\Phi$ is much more complicated than in the $G_2$ case cf. \cite[section $4.3$]{spirothesis}. An $8$-manifold admits a $Spin(7)$-structure if and only, if in addition to having zero first and second Stiefel-Whitney classes, either of the following holds
\[p_1(N)^2-4p_2(N)\pm 8 \chi(N)=0\]
cf. \cite{gray1970, LMspin}, noting that the `$8$' factor is accidentally omitted in the former. If $\Phi$ is parallel with respect to the Levi-Civita connection $\nabla^{g_\Phi}$ then the metric $g_\Phi$ has holonomy contained in $Spin(7)$ and the metric is Ricci-flat. Such a manifold is called a $Spin(7)$-manifold. Just as in the $G_2$ situation we have the following alternative formulation of the torsion free condition.
\begin{Th}\cite{Fernandez1986}\label{spin7thm}
$\nabla^{g_\Phi}\Phi=0$ if and only if $d\Phi=0.$
\end{Th}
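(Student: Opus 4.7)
The plan is to address the two implications separately. The forward direction $\nabla^{g_\Phi}\Phi = 0 \Rightarrow d\Phi = 0$ is immediate from the torsion-freeness of the Levi-Civita connection, since on differential forms the exterior derivative coincides with the skew-symmetrisation of $\nabla^{g_\Phi}$. The substance of the theorem lies entirely in the converse, and I would follow the representation-theoretic scheme that Fernández and Gray employed for the analogous $G_2$ statement cited above.

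First I would identify the bundle in which the intrinsic torsion takes values. Because $Spin(7)$ stabilises $\Phi$ pointwise, $\nabla^{g_\Phi}\Phi$ is a section of the bundle associated to the $Spin(7)$-module $T^*N \otimes \mathfrak{spin}(7)^\perp$, where the orthogonal complement is taken inside $\mathfrak{so}(8)$ with respect to the Killing form. A dimension count gives $\dim \mathfrak{spin}(7)^\perp = 28 - 21 = 7$, and one identifies $\mathfrak{spin}(7)^\perp \cong \mathbb{R}^7$ with the standard vector representation of $Spin(7)$. Thus the intrinsic torsion lives pointwise in $\mathbb{R}^8 \otimes \mathbb{R}^7$, where $\mathbb{R}^8$ denotes the spin representation. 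The standard branching rules for $Spin(7)$ yield $\mathbb{R}^8 \otimes \mathbb{R}^7 \cong \mathbb{R}^8 \oplus \mathbb{R}^{48}$, exhibiting the two torsion classes. On the other hand $d\Phi$ is a section of $\Lambda^5 T^*N$, which is Hodge-dual to $\Lambda^3 T^*N$, and the $56$-dimensional module $\Lambda^3(\mathbb{R}^8)^*$ decomposes under $Spin(7)$ into irreducibles of the same dimensions $8$ and $48$. The assignment $\nabla^{g_\Phi}\Phi \mapsto d\Phi$ is algebraic skew-symmetrisation, hence a $Spin(7)$-equivariant linear map between two modules whose irreducible decompositions match summand by summand.

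By Schur's lemma, this equivariant map restricts on each of the two irreducible summands either to zero or to a scalar isomorphism. The crux of the proof, and where I expect the main obstacle, is to rule out the vanishing on \emph{either} summand, since then $d\Phi = 0$ will force the intrinsic torsion to vanish. I would verify this by exhibiting, for each of the two torsion classes, an explicit $Spin(7)$-structure whose torsion is a pure non-zero element of that class and for which a direct calculation shows $d\Phi \neq 0$: a conformal rescaling $\Phi = e^{4f}\Phi_0$ of the flat structure on $\mathbb{R}^8$ yields torsion purely in the $\mathbb{R}^8$-summand, with $d\Phi = 4 e^{4f}\, df \wedge \Phi_0 \neq 0$ whenever $f$ is non-constant, while a suitable linear deformation of $\Phi_0$ at the origin can be arranged so that its torsion lies in the complementary $\mathbb{R}^{48}$-summand with $d\Phi|_0 \neq 0$. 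Once non-triviality of the equivariant map is established on both components, it is a linear isomorphism by $Spin(7)$-equivariance, and the implication $d\Phi = 0 \Rightarrow \nabla^{g_\Phi}\Phi = 0$ follows.
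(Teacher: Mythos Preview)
The paper does not supply a proof of this theorem; it is stated with a citation to Fern\'andez's original 1986 paper and then used as input for the subsequent torsion analysis. So there is nothing in the paper to compare your argument against beyond the bare attribution.

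Your sketch is the standard representation-theoretic proof and is essentially the argument Fern\'andez gives. The forward implication is trivial, as you say. For the converse you correctly identify the intrinsic torsion with a section of the bundle associated to $\mathbb{R}^8\otimes\mathfrak{spin}(7)^\perp\cong\Lambda^1_8\otimes\Lambda^2_7$, observe that this $56$-dimensional $Spin(7)$-module and $\Lambda^5\cong\Lambda^3_8\oplus\Lambda^3_{48}$ have matching irreducible decompositions, and then invoke Schur's lemma on the equivariant skew-symmetrisation map. The conformal example you give for the $8$-dimensional summand is the right one. The only soft spot is your treatment of the $48$-dimensional summand: ``a suitable linear deformation of $\Phi_0$ at the origin can be arranged'' is not quite a proof, and in practice one either computes the alternation map directly on a highest-weight vector, or produces an explicit $Spin(7)$-structure with $T^1_8=0$ but $T^5_{48}\neq 0$ (for instance a product $\mathbb{R}\times M^7$ with $M^7$ carrying a $G_2$-structure whose only non-zero torsion is $\tau_3$). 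This is routine to carry out, so the gap is one of detail rather than of strategy.
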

The space of differential forms on $N^8$ can be decomposed as $Spin(7)$-modules as follows:
\begin{align*}
\Lm^1 &= \Lm^1_8\\
\Lm^2 &= \Lm^2_7 \oplus \Lm^2_{21}\\
\Lm^3 &= \Lm^3_8 \oplus \Lm^3_{48}\\
\Lm^4 &= \Lm^4_1 \oplus \Lm^4_7 \oplus \Lm^4_{27} \oplus \Lm^4_{35}.
\end{align*}
We shall write $\Lm^i_j(M^7)$ or $\Lm^i_j(N^8)$ if there is any possible ambiguity. There is also an injection map $i:S^2 \hookrightarrow \Lm^4$ which restricts to an isomorphism of $Spin(7)$-modules
\begin{align*}
\textup{\textsf{i}}:\langle g_{\Phi}\rangle  \oplus S^2_0 &\to \Lm^4_1 \oplus \Lm^4_{35}\\
a \circ b &\mapsto a \w *_\Phi (b \w \Phi)+b \w *_\Phi (a \w \Phi)
\end{align*}
where $S^2_0$ denotes the space of traceless symmetric $(0,2)$-tensors. Note that $\textup{\textsf{i}}(g_\Phi)=8 \Phi$. We denote by $\textup{\textsf{j}}$ the inverse map extended to $\Lm^4$ as the zero map on $\Lm^4_7 \oplus \Lm^4_{27}$. 
Similarly the intrinsic torsion is given by $\dim(\R^8\otimes \mathfrak{spin}(7)^\perp)=56$ equations and is completely determined by the exterior derivative of $\Phi$ in view of  Theorem \ref{spin7thm}. This can be written as
\begin{equation}\label{spin7torsion}
d\Phi=T^1_8 \w \Phi + T^5_{48}.
\end{equation}
where $T^5_{48}$ is defined by the condition $*_\Phi T^5_{48} \w \Phi =0.$ If $T^1_8$ vanishes the $Spin(7)$- structure is called balanced, if $T^5_{48}$ vanishes it is locally conformally parallel and if both are zero then it is torsion free.

In this article we shall often use the suggestive notation $\kappa^l_m$ for an $l$-form to mean that $\kappa^l_m \in \Om^l_m$ or write $(\kappa)^l_m$ for the $\Om^l_m$-component of an $l$-form $\kappa.$ Having set up our convention we now proceed to describe the $S^1$-reduction of $Spin(7)$-structures.
\section{The quotient construction}\label{quotientconstruction}

Given an $8$-manifold $N^8$ endowed with a $Spin(7)$-structure $\Phi$ which is invariant under a free circle action generated by a vector field $X$ the quotient manifold $M^7$ inherits a natural $G_2$-structure $\vp:=\iota_X \Phi.$ We can write the $Spin(7)$ form as
\begin{equation}\Phi = \eta \w \vp + s^{4/3} *_\vp \vp\label{basicequation} \end{equation}
where $s=\|X\|^{-1}_\Phi$, $\eta(\cdot)= s^2g_\Phi(X,\cdot)$ and $*_\vp$ is the Hodge star induced by $\vp$ on $M$. The proof for this expression is analogous to that of lemma \ref{hodgestarlemma} below. The assumption that the action is free i.e. $X$ is nowhere vanishing implies that $s$ is a well-defined strictly positive function. The metrics and volume forms of $M$ and $N$ are related by
\begin{gather}
g_\Phi=s^{-2}\eta^2 + s^{2/3}g_\vp\label{metric} \\
vol_\Phi=s^{4/3}\eta \w vol_\vp\label{volume} .
\end{gather}
In this setup $\eta$ can be viewed as a connection $1$-form on the $S^1$-bundle $N$ over $M$ and $d\eta$ is its curvature, which by Chern-Weil theory defines a section in $\Om^2(M,\mathbb{Z})$. We denote by $(d\eta)^2_7$ and $(d\eta)^2_{14}$ its two components.
Under the inclusion $G_2 \hookrightarrow Spin(7)$ we may decompose the torsion forms of (\ref{spin7torsion}) further as 
\[T^1_8=f\cdot \eta+T^1_7\]
\[T^5_{48}=T^5_{7}+T^5_{14}+\eta \w (T^4_{7}+T^4_{27}) \]
where $f$ is (the pullback of) a function on $M^7$ and all the differential forms on the right hand side, aside from $\eta$, are basic. Note that $56=8+48=(1+7)+(7+14+7+27)=49+7$ where $56$ and $49$ are the dimensions of the space of intrinsic torsions of $Spin(7)$ and $G_2$ structures. This simple dimension count confirms the absence of any $T^4_1$ term. Moreover this says that the intrinsic torsion of $\Phi$ is determined by that of $\vp$ together with a section of a rank $7$ vector bundle. In order to relate the intrinsic torsion of the $Spin(7)$-structure to that of the $G_2$-structure we first need to relate their Hodge star operators.
\begin{Lemma}\label{hodgestarlemma}
Given $\al\in\Lm^2_{7}(Y)$, $\beta\in \Lm^2_{14}(Y)$, $\gamma \in \Lm^1_7(Y)$ and using the same notation for their pullbacks to $N^8$ we have 
\begin{enumerate}
\item $*_\Phi(\al \w \vp)=-2s^{-2}\eta\w \al$
\item $*_\Phi(\beta \w \vp)=s^{-2}\eta\w \beta$
\item $*_{\Phi}\gamma=-s^{2/3}\eta \w *_\vp \gamma$
\item $*_{\Phi}\eta=s^{10/3}\vol_\vp$
\item $*_\Phi(\eta \w \al)=\frac{1}{2}s^2 \al\w\vp$
\item $*_\Phi(\eta\w\beta)=-s^2\beta\w\vp$
\item $*_\Phi(\eta\w\gamma)=s^{8/3}*_\vp \gamma$
\end{enumerate}
\end{Lemma}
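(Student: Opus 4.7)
The plan is to work in an orthonormal coframe adapted to the metric splitting (\ref{metric}). I would first pick a local $g_\vp$-orthonormal coframe $\{e_1,\ldots,e_7\}$ on $M^7$, pull it back to $N$, and set $\tilde e_0 = s^{-1}\eta$, $\tilde e_i = s^{1/3} e_i$ for $1\le i\le 7$; by (\ref{metric}) this is a $g_\Phi$-orthonormal coframe on $N^8$, and a direct check gives
\begin{equation*}
vol_\Phi \;=\; \tilde e_0\wedge\tilde e_1\wedge\cdots\wedge\tilde e_7 \;=\; s^{4/3}\,\eta\wedge vol_\vp,
\end{equation*}
which recovers (\ref{volume}). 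Item (4) then falls out immediately: $\eta = s\tilde e_0$ and $*_\Phi \tilde e_0 = \tilde e_1\wedge\cdots\wedge\tilde e_7 = s^{7/3}\,vol_\vp$, so $*_\Phi \eta = s^{10/3}\,vol_\vp$.

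Next I would extract two universal formulas that subsume the remaining items. For a basic $k$-form $\om$ on $N$ (the pullback of a $k$-form on $M$), expanding $e_{i_1}\wedge\cdots\wedge e_{i_k} = s^{-k/3}\,\tilde e_{i_1}\wedge\cdots\wedge\tilde e_{i_k}$ and comparing $*_\Phi$ on such tilde-basis elements with $*_\vp$ on the corresponding $e$-basis elements, the sign $(-1)^k$ coming from moving $\tilde e_0$ across $k$ wedge factors to assemble $vol_\Phi$, one obtains
\begin{equation*}
*_\Phi\,\om \;=\; (-1)^k\,s^{(4-2k)/3}\,\eta\wedge *_\vp\om,\qquad *_\Phi(\eta\wedge\om) \;=\; s^{(10-2k)/3}\,*_\vp\om.
\end{equation*}
Item (3) is the $k=1$ case of the first formula, and item (7) is the $k=1$ case of the second.

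For items (1), (2), (5), (6) I would specialise these to 2-forms on $M$ and invoke the standard $G_2$-module characterisations on $\Lm^2$, namely $*_\vp(\al\wedge\vp) = 2\al$ for $\al\in\Lm^2_7$ and $*_\vp(\be\wedge\vp) = -\be$ for $\be\in\Lm^2_{14}$, equivalently $*_\vp\al = \tfrac{1}{2}\al\wedge\vp$ and $*_\vp\be = -\be\wedge\vp$. Items (1) and (2) then follow by applying the first formula with $k=5$ to the basic $5$-forms $\al\wedge\vp$ and $\be\wedge\vp$, while items (5) and (6) follow by applying the second formula with $k=2$ to $\al$ and $\be$ and substituting the above characterisations.

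No genuine obstacle arises; the computation is pure bookkeeping, with every factor of $s$ flowing uniformly from the conformal rescaling $\tilde e_i = s^{1/3} e_i$ and the normalisation $\tilde e_0 = s^{-1}\eta$. The only care needed is to track the parities in the $(-1)^k$ sign when comparing $*_\Phi$ with $*_\vp$ on basic forms, which is transparent in the adapted frame.
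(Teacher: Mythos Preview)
Your proposal is correct and follows essentially the same route as the paper: pick an adapted orthonormal coframe for $g_\Phi$, compute $*_\Phi$ in that frame, and invoke the eigenvalue characterisation of $\Lm^2_7$ and $\Lm^2_{14}$ under $*_\vp(\,\cdot\,\wedge\vp)$. The only difference is cosmetic: the paper proves item (1) as a representative case, whereas you package the computation into the two universal formulas $*_\Phi\om = (-1)^k s^{(4-2k)/3}\eta\wedge *_\vp\om$ and $*_\Phi(\eta\wedge\om) = s^{(10-2k)/3}*_\vp\om$, which is a tidy way to read off all seven items at once.
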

\begin{proof}
This is a straightforward computation using \ref{metric}, \ref{volume} and the characterisation of $\Lm^2_7$ and $\Lm^2_{14}$ as having eigenvalues $+2$ and $-1$ under wedging with $\vp$ and taking the Hodge star \cite{Bryant1987}. We prove ($1$) as an example. Since we only need to prove the above formula holds at each point we may pick coordinates at a point $q\in N$ such that $\eta=dx_0$ and $\vp=\vp_0$. For any given $\vartheta \in \Om^2(Y)$ we then have
\[*_\Phi(\vartheta \w \vp)=-s^{-2} \eta \w *_\vp (\vartheta \w \vp) .\]
If $\vartheta=\al$ from \cite{Bryant1987} we have $*_\vp (\al \w \vp)=2\al$, which completes the proof of $(1).$ 
\end{proof}
\begin{Prop}\label{keytheorem}The intrinsic torsion of the $Spin(7)$-structure and $G_2$-structure are related by
\begin{enumerate}
\item $f=-s^{-4/3}\tau_0$
\item $7 T^1_7=24 \tau_1+3s^{-4/3}d(s^{4/3})+2s^{-4/3}*_{\vp}((d\eta)^2_7\w*_\vp\vp)$
\item $7 T^5_7=4(d\eta)^2_7\w\vp+4d(s^{4/3})\w *_\vp \vp + 4s^{4/3}\tau_1\w*_\vp\vp$
\item $T^5_{14}=(d\eta)^2_{14}\w\vp+s^{4/3}\tau_2\w\vp$
\item $T^4_{27}=-*_\vp \tau_{3}$
\item $T^4_7$ and $T^5_7$ are $G_2$-equivalent up to a factor of $s^{-4/3}$; explicitly, the composition
$$L:\Lm^5_7\xrightarrow{*} \Lm^2_7 \xrightarrow{\w*\vp} \Lm^6_7 \xrightarrow{*} \Lm^1_7 \xrightarrow{\w\vp} \Lm^4_7$$
is a bundle isomorphism and $L(7T^5_7)=4s^{-4/3}T^4_7$. 
\end{enumerate}
Moreover the occurrence of $\tau_1$ in both $(2)$ and $(3)$ shows that
\begin{equation}\label{curvature^2_7}
T^5_7-\frac{1}{6}s^{4/3}T^1_7\w*_\vp\vp =\frac{1}{2}(d(s^{4/3})\w *_\vp\vp + (d\eta)^2_7\w \vp) 
\end{equation}
and
\begin{equation}\label{tau1}
3\tau_1\w *_\vp \vp=T^1_7\w*_\vp\vp -\frac{3}{4}s^{-4/3}T^5_7;
\end{equation}
in other words any one of the three $7$-dimensional $Spin(7)$ torsion component determines the other two.
\end{Prop}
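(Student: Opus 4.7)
The plan is to compute $d\Phi$ in two ways and then match $G_2$-irreducible components. First, differentiating $\Phi=\eta\w\vp+s^{4/3}*_\vp\vp$ and substituting the $G_2$-torsion identities (\ref{g2torsion1}), (\ref{g2torsion2}) gives
\[
d\Phi=d\eta\w\vp-\eta\w d\vp+d(s^{4/3})\w *_\vp\vp+s^{4/3}\,d(*_\vp\vp),
\]
which splits naturally as a basic 5-form on $M$ plus $\eta\w(\text{basic 4-form on }M)$. Applying the same split to $T^1_8\w\Phi+T^5_{48}$ using (\ref{basicequation}) and the stated decompositions of $T^1_8$ and $T^5_{48}$ lines the two sides up block-by-block.

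The basic 4-form block decomposes as $\Lm^4_1\oplus\Lm^4_7\oplus\Lm^4_{27}$ and the basic 5-form block as $\Lm^5_7\oplus\Lm^5_{14}$. The $\Lm^4_1$ comparison immediately yields $-\tau_0=fs^{4/3}$, i.e.~(1); the $\Lm^4_{27}$ comparison yields $T^4_{27}=-*_\vp\tau_3$, i.e.~(5); and the $\Lm^5_{14}$ comparison yields $T^5_{14}=(d\eta)^2_{14}\w\vp+s^{4/3}\tau_2\w\vp$, i.e.~(4). In these three sectors the $G_2$-irreducibles appear uniquely on each side, so no further disentangling is required.

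The delicate step is the $\Lm_7$ sector, where $T^1_7,T^4_7,T^5_7$ are coupled. The $\Lm^4_7$ and $\Lm^5_7$ comparisons produce
\begin{align*}
T^4_7 &= (T^1_7-3\tau_1)\w\vp,\\
T^5_7+s^{4/3}T^1_7\w *_\vp\vp &= (d\eta)^2_7\w\vp+d(s^{4/3})\w *_\vp\vp+4s^{4/3}\tau_1\w *_\vp\vp,
\end{align*}
but these do not separate the three unknowns. The missing third equation is the defining constraint $*_\Phi T^5_{48}\w\Phi=0$, which is equivalent to $T^5_7+\eta\w T^4_7$ being $g_\Phi$-orthogonal to the image of $\al\mapsto\al\w\Phi=-\eta\w\al\w\vp+s^{4/3}\al\w *_\vp\vp$ for $\al\in\Lm^1_7$, inside the $14$-dimensional $G_2$-module $(\eta\w\Lm^4_7)\oplus\Lm^5_7$. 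This orthogonality imposes a specific linear relation between $T^4_7$ and $T^5_7$; combined with the two coupling equations it determines all three forms. Lemma~\ref{usefulidentities}, in particular $2*_\vp(\beta\w *_\vp\vp)\w *_\vp\vp=3\beta\w\vp$, is used to convert between $\Lm^2_7\w\vp$ and $\Lm^1_7\w *_\vp\vp$ expressions, producing the coefficients $24,3,2$ and the denominator $7$ in~(2); substituting back then gives~(3) and~(6), and the ``moreover'' identities (\ref{curvature^2_7}) and (\ref{tau1}) are direct linear combinations of~(2) and~(3).

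The main obstacle is precisely this $\Lm_7$ bookkeeping: the diagonal embedding of $\Lm^5_8$ carries the scale factor $s^{4/3}$, and together with the mismatched norm factors of the two natural isomorphisms $\Lm^1_7\to\Lm^4_7$ and $\Lm^1_7\to\Lm^5_7$ (wedging with $\vp$ versus $*_\vp\vp$) this is what propagates into the various $s$-powers and numerical coefficients in the final identities.
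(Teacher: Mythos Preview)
Your approach is correct and reaches the same conclusions, but the paper takes a shorter route through the $\Lambda_7$ sector. Rather than matching $d\Phi=T^1_8\w\Phi+T^5_{48}$ component-by-component and then invoking the orthogonality constraint $*_\Phi T^5_{48}\w\Phi=0$ as a third equation to close the coupled system for $(T^1_7,T^4_7,T^5_7)$, the paper first computes $*_\Phi d\Phi$ in one shot using Lemma~\ref{hodgestarlemma}, and then applies the projection identity $7\,*_\Phi T^1_8=(*_\Phi d\Phi)\w\Phi$ to read off $f$ and $T^1_7$ directly. Once $T^1_8$ is known, $T^5_{48}=d\Phi-T^1_8\w\Phi$ is simply the remainder, and its $G_2$-components $T^4_7,T^4_{27},T^5_7,T^5_{14}$ can be written down without solving any linear system. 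The trade-off: your first step (differentiating $\Phi$) is cleaner since it avoids the Hodge star on $N$, but you pay for this later by having to unwind the orthogonality constraint, which re-introduces $*_\Phi$ and the metric scale factors anyway; the paper front-loads the $*_\Phi$ computation and thereby sidesteps the linear algebra in the $\Lambda_7$ sector entirely.
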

\begin{proof}
Using lemmas \ref{usefulidentities} and \ref{hodgestarlemma} we compute
\begin{align*}
*_\Phi d\Phi =\ &s^{-2}\eta \w (\d\eta)^2_{14}-2s^{-2}\eta\w(d\eta)^2_7 -3s^{2/3}*_\vp(\tau^1\w\vp)-\tau_0\ s^{2/3}\vp-s^{2/3}\tau_{3}\\ &-s^{-2}*_\vp(d(s^{4/3})\w*_\vp\vp)\w \eta + s^{-2/3}\tau_2\w\eta-4s^{-2/3}\eta\w*_\vp(\tau^1\w*_\vp\vp). 
\end{align*}
It now suffices to use the identity $7 *_\Phi T^1_8= *_\Phi(d\Phi)\w\Phi$ cf. \cite{spirospin7} and compare terms.
\end{proof}
\begin{Rem}
Note that the above construction can also be extended to non-free $S^1$ actions by working on the complement of the fixed point locus. The fixed point locus then corresponds to the region where $s$ blows up. We shall in fact see an example of this below when we look at the Bryant-Salamon $Spin(7)$ metric.
\end{Rem}
Equipped with above proposition we can now proceed to studying the quotient of different types of $Spin(7)$-structures.
\subsection{The torsion free quotient}\label{tf}
\begin{Th}
Assuming $(N^8,\Phi)$ is a $Spin(7)$-manifold, the quotient $G_2$-structure $\vp$ is calibrated and the curvature is determined by
\begin{equation}\label{equ11}(d\eta)^2_7 \w*_\vp\vp=-\frac{3}{2}*_\vp d(s^{4/3}) \end{equation}  and \begin{equation}\label{equ22}(d\eta)^2_{14} =- s^{4/3}\tau_2 .\end{equation}
\end{Th}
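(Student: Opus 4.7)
The plan is to specialise Proposition \ref{keytheorem} to the torsion-free case. By Theorem \ref{spin7thm}, the hypothesis that $(N^8,\Phi)$ is a $Spin(7)$-manifold is equivalent to $d\Phi=0$, i.e.\ $T^1_8=0$ and $T^5_{48}=0$; under the decompositions $T^1_8=f\,\eta+T^1_7$ and $T^5_{48}=T^5_7+T^5_{14}+\eta\w(T^4_7+T^4_{27})$, each of the basic components $f$, $T^1_7$, $T^5_7$, $T^5_{14}$, $T^4_7$ and $T^4_{27}$ must vanish individually.

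First I would deduce that $\vp$ is calibrated, i.e.\ $d\vp=\tau_0\,*_\vp\vp+3\tau_1\w\vp+*_\vp\tau_3=0$. Item $(1)$ of Proposition \ref{keytheorem} gives $\tau_0=0$ and item $(5)$ gives $\tau_3=0$, while the supplementary relation \eqref{tau1} collapses to $3\tau_1\w *_\vp\vp=0$. Since the map $\Lm^1_7\ni\al\mapsto\al\w *_\vp\vp$ is injective --- its image lies in $\Lm^5_7$ by $G_2$-equivariance and is non-zero by Lemma \ref{usefulidentities}(2) --- this forces $\tau_1=0$, and hence $d\vp=0$.

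For the first curvature identity, I would substitute $\tau_1=0$ and $T^1_7=0$ into item $(2)$ of Proposition \ref{keytheorem}, which reduces to
$$3\,d(s^{4/3})+2*_\vp\!\bigl((d\eta)^2_7\w *_\vp\vp\bigr)=0,$$
and apply $*_\vp$ (with $*_\vp^2=\mathrm{id}$ on the $7$-manifold $M$) to obtain \eqref{equ11}. For the second, item $(4)$ with $T^5_{14}=0$ gives $(d\eta)^2_{14}\w\vp=-s^{4/3}\tau_2\w\vp$; since $\beta\w\vp=-*_\vp\beta$ for $\beta\in\Lm^2_{14}$, wedging with $\vp$ is injective on $\Lm^2_{14}$, which yields \eqref{equ22}.

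No step presents a substantial obstacle: the whole argument is a mechanical unpacking of the dictionary set up by Proposition \ref{keytheorem}. The only subtlety worth flagging is the extraction of $\tau_1=0$, which would be awkward to obtain directly from items $(2)$--$(3)$ of Proposition \ref{keytheorem} but is a one-line consequence of the supplementary relation \eqref{tau1}.
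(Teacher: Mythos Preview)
Your proof is correct and follows essentially the same approach as the paper: both arguments read off $\tau_0=\tau_1=\tau_3=0$ and the two curvature identities directly from Proposition \ref{keytheorem}. The only cosmetic difference is that you obtain \eqref{equ11} by substituting $\tau_1=0$ and $T^1_7=0$ into item $(2)$ and applying $*_\vp$, whereas the paper quotes the derived relation \eqref{curvature^2_7} (which, combined with Lemma \ref{usefulidentities}, yields the same thing); likewise both obtain \eqref{equ22} from item $(4)$.
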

\begin{proof}
This follows directly from proposition \ref{keytheorem}. From (1), (\ref{tau1}) and (5) we see that $\tau_0$, $\tau_1$ and $\tau_{3}$ must vanish. The curvature equations follow from (\ref{curvature^2_7}) and (4).
\end{proof}
The above equations have also been described as a Gibbons-Hawking type ansatz for $Spin(7)$-manifolds in \cite{Foscolo2019}, where the author studies `adiabatic limits' of the equations to produce new complete non-compact $Spin(7)$ manifolds. The pair (\ref{equ11}) and (\ref{equ22}) generally constitute a complicated system of PDEs. A strategy to solving this system and hence constructing  $Spin(7)$ metrics on the total space involves taking a formal limit of the equations as the size of the circle fibres tend to zero and thus, allowing for the system to degenerate to the torsion free $G_2$ equations. One then employs analytical techniques to perturb the latter equations to construct solutions to the original system. This limiting procedure of  shrinking the fibres is referred to as the `adiabatic limit' following a related strategy outlined in \cite{Donaldson2017} in the context of $K3$-fibred $G_2$ manifolds.
\begin{Rem}\text{\\ }
\begin{enumerate}
\item First we note that in our setting if $(N,\Phi)$ has holonomy \textit{equal to} $Spin(7)$ then it is necessarily non-compact. This follows essentially from the Cheeger-Gromoll splitting theorem which asserts that if $(N,\Phi)$ is compact and Ricci-flat then its universal cover is isometric to $\R^k \times P^{8-k}$ where $P$ is a simply connected Riemannian manifold and $\R^k$ carries the flat metric. Under the hypothesis that it admits a free isometric $S^1$ action it follows that $k\geq 1$ which together with Berger's classification of holonomy groups implies that (the identity component of) the holonomy group of $N$ must be a subgroup of $G_2.$
\item If the size of the circle orbits are constant i.e. $s$ is constant then $\tau_2$ is proportional to $d\eta$ so in particular $\tau_2$ is closed. But from equation ($4.35$) of \cite{Bryant06someremarks}
\[d\tau_2= \frac{1}{7}\|\tau_2\|^2_\vp + (d\tau_2)^3_{27}  \]
and hence $\tau_2=0$ i.e. $N^8=S^1 \times M^7$ is a Riemannian product.
\end{enumerate}
\end{Rem}
%Note that the former equation is equivalent to $(d\eta)^2_7\w\vp=-d(s^{4/3})\w *_\vp\vp.$
If we now further demand that $(M^7,\vp)$ is also torsion free then this forces the connection to be a $G_2$-anti-instanton i.e. $d\eta \in \Lm^2_7$, compare with definition \ref{instanton}. Since $ds$ is closed, $\nabla ds \in S^2(T^*Y)\cong\Lm^3_{1} \oplus \Lm^3_{27}$ cf. \cite{Bryant1989} \cite{Salamon1989} but we also have
$$d\eta \w *_\vp \vp = -\frac{3}{2}*_\vp d(s^{4/3})$$
i.e. $d\eta$ and $d(s^{4/3})$ are $G_2$-equivalent, therefore the two components of $\nabla ds$ are completely determined by the $\Lm^3_1$ and $\Lm^3_{27}$ components of $dd\eta=0 \in \Lm^3$. Hence $ds$ is a covariantly constant $1$-form as such $Hol \subsetneq G_2$ \cite[Theorem 4]{Bryant1989}. Thus we have proven the following.
\begin{Th}\label{torsionfreequotient}
If $(N^8,\Phi)$ is a torsion free $Spin(7)$-structure which is invariant under a free $S^1$ action such that the quotient structure has holonomy contained in $G_2$ then $M^7=Z^6 \times \R^+$ where $(Z^6,h,\om,\Om:=\Om^+ + i\Om^-)$ is a Calabi-Yau $3$-fold. Furthermore $(N^8,\Phi)$ is a Calabi-Yau $4$-fold and is given by $\Phi=\frac{1}{2} \hat{\om}^2 + Re(\hat{\Om})$ where
\begin{gather}
\hat{\om}= s^{2/3} \om + \eta \w d(s^{2/3})\label{symplectic} \\
\hat{\Om}=\Om \w (- \eta -i \frac{2}{3} s^{{5/3}}ds)\label{complex} \\
\hat{h}= s^{{2/3}}h + s^{-2}\eta^2 + (\frac{2}{3}s^{{2/3}} \ds)^2\label{metriccy}
\end{gather}
defines the $SU(4)$-structure and $s$ is the coordinate on the $\R^+$ factor. The curvature form is $d\eta=-\om$, corresponding to a $G_2$-anti-instanton, and the product $G_2$-structure is given by
\begin{gather*}
\vp=\frac{2}{3}s^{1/3}ds\w\om+\Om^+ \\
*_\vp\vp=\frac{1}{2}\om^2-\frac{2}{3}s^{1/3}ds\w\Om^-.
\end{gather*}
Moreover this construction is reversible i.e. starting from a CY $3$-fold $(Z^6,h,\om,\Om)$ we can choose a connection form $\eta$ satisfying $d\eta=-\om$ on the bundle defined by $[-\om]\in H^2(Z^6,\mathbb{Z})$ together with a positive function $s$ and thus define an irreducible CY $4$-fold $(N^8,\hat{h},\hat{\om},\hat{\Om})$ by (\ref{symplectic}) ,(\ref{complex}) and (\ref{metriccy}).
\end{Th}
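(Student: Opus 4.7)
\emph{Proof strategy.} The argument proceeds in three stages: reducing the holonomy of $(M,\vp)$, writing down the product structure explicitly, and verifying reversibility.

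First, I would combine the assumption that $\vp$ is torsion-free with Proposition \ref{keytheorem}. Setting the four $G_2$ torsion forms $\tau_0, \tau_1, \tau_2, \tau_3$ to zero in the statements of the theorem yields both $d\eta \in \Lm^2_7(M)$ (from \eqref{equ22} with $\tau_2=0$) and the scalar equation $(d\eta)^2_7 \w *_\vp \vp = -\frac{3}{2}*_\vp d(s^{4/3})$. Since $ds$ is closed, $\nabla ds$ is a symmetric $(0,2)$-tensor and lies in $S^2(T^*M) \cong \Lm^3_1 \oplus \Lm^3_{27}$. Using the $G_2$-equivariant identification of $d\eta$ with $d(s^{4/3})$ displayed above, both components of $\nabla d(s^{4/3})$ are controlled by the $\Lm^3_1$ and $\Lm^3_{27}$ parts of $d(d\eta) = 0$, and hence vanish. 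Thus $ds$ is a non-trivial parallel 1-form on $(M,\vp)$.

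Second, by Bryant's theorem \cite[Theorem 4]{Bryant1989} a non-zero parallel 1-form forces the holonomy to be a strict subgroup of $G_2$; together with the de Rham splitting this gives $M^7 \cong Z^6 \times \R^+$ with $Z^6$ a Calabi-Yau 3-fold $(h, \omega, \Omega = \Omega^+ + i\Omega^-)$ and $s$ a function of the $\R^+$-coordinate alone. The standard relation between torsion-free $G_2$- and $SU(3)$-structures on such a product gives $\vp = dt \w \omega + \Omega^+$ and $*_\vp \vp = \frac{1}{2}\omega^2 - dt \w \Omega^-$, where the normalization $dt = \frac{2}{3}s^{1/3}ds$ is forced by matching the coefficient of $ds^2$ in the $g_\vp$ metric. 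Substituting into $\Phi = \eta \w \vp + s^{4/3}*_\vp \vp$ and regrouping the result as $\frac{1}{2}\hat{\omega}^2 + \mathrm{Re}(\hat{\Omega})$ reproduces formulas \eqref{symplectic}--\eqref{metriccy} by direct bookkeeping of wedge products. The relation $d\eta = -\omega$ then follows either from \eqref{equ11} after computing $*_\vp d(s^{4/3})$ on the product, or equivalently by demanding closedness of $\hat{\omega}$.

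For reversibility, I would run the construction backwards: starting from a Calabi-Yau 3-fold $(Z, h, \omega, \Omega)$ with $[-\omega] \in H^2(Z, \mathbb{Z})$, pick a connection $\eta$ on the associated circle bundle satisfying $d\eta = -\omega$ and a positive function $s$, and define $\hat{\omega}, \hat{\Omega}, \hat{h}$ by the stated formulas and $\hat{\Phi} := \frac{1}{2}\hat{\omega}^2 + \mathrm{Re}(\hat{\Omega})$. A short direct computation using $d\omega = 0$ gives $d\hat{\omega} = 0$, while $d\hat{\Omega} = 0$ follows from $d\Omega = 0$ and the identity $\omega \w \Omega = 0$ on $Z$. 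Theorem \ref{spin7thm} then forces $\nabla \hat{\Phi} = 0$, so $(N, \hat{\Phi})$ defines an irreducible Calabi-Yau 4-fold structure, with irreducibility following from the freeness of the $S^1$-action. The main obstacle is the first step: making precise the claim that the $G_2$-equivalence $d\eta \leftrightarrow d(s^{4/3})$ transfers the vanishing of $d(d\eta)$ into the vanishing of both components of $\hess(s^{4/3}) \in \Lm^3_1 \oplus \Lm^3_{27}$; after this representation-theoretic step the remainder is algebraic identification on $Z \times \R^+ \times S^1$.
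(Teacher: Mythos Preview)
Your proposal is correct and follows essentially the same route as the paper: the key step---using the $G_2$-equivariant identification between $d\eta\in\Lambda^2_7$ and $d(s^{4/3})$ to transfer $d(d\eta)=0$ into the vanishing of the $\Lambda^3_1\oplus\Lambda^3_{27}$ components of $\nabla ds$, hence $ds$ parallel---is exactly the argument given in the paragraph preceding the theorem, and the remaining identification of the $SU(4)$-structure and the reversibility are, as you say, algebraic bookkeeping. One small point: your justification of irreducibility from freeness of the $S^1$ action is not quite a proof (the paper itself only verifies this in the example via a rank computation), and you should also note separately the degenerate case $ds\equiv 0$, which forces $d\eta=0$ and a Riemannian product.
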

The above theorem in fact recovers the so-called \textit{Calabi model space}. More precisely, given a compact CY manifold together with an ample line bundle $L$, the Calabi ansatz gives a way of defining a new CY metric on an open set of $L$. Although incomplete, the Calabi model space provides a good approximation for the asymptotic behaviour of the complete Tian-Yau metrics and has recently been employed in \cite{hsvz} to study new degenerations of hyperK\"ahler metrics on $K3$ surfaces. Observe that taking $(Z^6, h)$ to be the Riemannian product of a hyperK\"ahler metric obtained by the Gibbons-Hawking ansatz and a flat torus $\mathbb{T}^2$ we get infinitely many holomomy $SU(4)$ metrics. We now give a simple example to illustrate this construction. The metric below has also been described in \cite{MarcoFreibert2018} as a solution to the Hitchin flow starting from a $7$-nilmanifold endowed with a cocalibrated $G_2$-structure.\\
\textbf{Example.}
Consider $\mathbb{T}^6$, with coordinates $\theta_i\in [0,2\pi)$, endowed with the flat CY-structure
\begin{gather*}
\om=e^{12} +e^{34}+e^{56},\\
\Om= (e^{1}+ie^2)\w(e^{3}+ie^4)\w(e^{5}+ie^6),
\end{gather*}
where $e^i=d\theta_i$. $[-\om]\in H^2(\mathbb{T}^6,\mathbb{Z})$ defines a non-trivial $S^1$- bundle diffeomorphic to the nilmanifold $P$ with nilpotent Lie algebra $(0,0,0,0,0,0,12+34+56)$ where we are using Salamon's notation cf. \cite{Salamoncomplexstructures}.  The connection form is given by 
$$\eta=d\theta_7+\theta_2 e^1+\theta_4 e^3+\theta_6 e^5,$$ where $\theta_7$ denotes the coordinate of the $S^1$ fibre. Writing $s=r^3$ the CY metric on $P\times \R^+$ can be written as
\[\hat{h}=r^2g_{\mathbb{T}^6}+r^{-6}(d\theta_7+\theta_2 e^1+\theta_4 e^3+\theta_6 e^5)^2 + 4r^8dr^2 . \]
Using \textsc{Maple} we have been able to verify that indeed the matrix of curvature $2$-form has rank $15$ everywhere, confirming that the holonomy is \textit{equal} to $SU(4).$
If we set $\rho=\frac{2}{5}r^5$ then the metric can be written as
\[\hat{h}=\big(\frac{5}{2}\ \rho\big)^{2/5}g_{\mathbb{T}^6}+\big(\frac{5}{2}\ \rho\big)^{-6/5}(d\theta_7+\theta_2 e^1+\theta_4 e^3+\theta_6 e^5)^2 + d\rho^2 \]
and in this form we can easily show that the volume growth $\sim \rho^{8/5}$ and the curvature tensor $|\textrm{Rm}| \sim \rho^{-2}$ as $\rho \to \infty.$ This metric is in fact incomplete at the end $\rho \to 0$ and complete at the end $\rho \to \infty.$ By way of comparing with the approach in \cite{MarcoFreibert2018}, the $SU(4)$ holonomy metric can also be obtained by evolving the cocalibrated $G_2$-structure on $P$ given by
\[\vp=\eta \w \om + Re(\Om),\] 
in the notation of  Theorem \ref{torsionfreequotient}. Our approach however avoids the problem of having to solve the Hitchin flow evolution equations and moreover, it explains why one only obtains $SU(4)$ holonomy metrics rather than $Spin(7)$ ones.

As we have just seen one cannot obtain a holonomy $G_2$ metric from a $Spin(7)$ manifold via this construction. This suggests to study instead the geometric structure of the quotient calibrated $G_2$-structure. We shall do so in detail for the Bryant-Salamon $Spin(7)$-metric in section \ref{bsconequotient}.
\subsection{The locally conformally parallel quotient}
\begin{Th}\label{locallyconformallyparallelquotient}
If $(N^8,\Phi)$ is a locally conformally parallel $Spin(7)$-structure which is $S^1$-invariant then at least one of the following holds:
\begin{enumerate}
\item $N^8\simeq M^7\times S^1$ and the $G_2$-structure on $M$ has $\tau_{3}=0$ in the notation of \ref{g2torsion1}, or
\item $(M^7,\vp)$ is locally conformally calibrated i.e. $\tau_0$ and $\tau_3$ are both zero, and hence $\tau_1$ is closed.
\end{enumerate}
\end{Th}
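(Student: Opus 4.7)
The plan is to expand the locally conformally parallel condition $T^5_{48}=0$ via Proposition~\ref{keytheorem}, then to exploit the automatic closedness of the Lee form $T^1_8$, splitting into two cases according to whether the ``vertical'' coefficient $f$ (the coefficient of $\eta$ in $T^1_8=f\eta+T^1_7$) vanishes.

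\emph{Immediate consequences.} From the decomposition $T^5_{48}=T^5_7+T^5_{14}+\eta\wedge(T^4_7+T^4_{27})$, the hypothesis forces every piece to vanish. Item~(5) of Proposition~\ref{keytheorem} then gives $\tau_3=0$, and substituting $T^5_7=0$ into~(\ref{tau1}) gives $T^1_7=3\tau_1$ (the wedge product with $*_\vp\vp$ is injective on $\Lm^1$).

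\emph{Closedness of the Lee form.} Differentiating $d\Phi=T^1_8\wedge\Phi$ yields $dT^1_8\wedge\Phi=0$. Since the $Spin(7)$-modules $\Lm^2_7$ and $\Lm^2_{21}$ are eigenspaces of $\alpha\mapsto *_\Phi(\alpha\wedge\Phi)$ with distinct non-zero eigenvalues, the wedging map $\wedge\Phi:\Lm^2\to\Lm^6$ is injective, hence $dT^1_8=0$. Writing $T^1_8=f\eta+T^1_7$ with $T^1_7$ basic (so that $dT^1_7$ is basic too),
\[
 0 = dT^1_8 = df\wedge\eta + f\,d\eta + dT^1_7.
\]
Applying $\iota_X$ and using $\iota_X\eta=1$ together with the fact that $\iota_X$ annihilates basic forms gives $df=0$, so $f$ is constant on $M$; the residual basic identity is $f\,d\eta + 3\,d\tau_1 = 0$.

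\emph{Dichotomy and main obstacle.} If $f=0$, then $\tau_0=-s^{4/3}f=0$ by Proposition~\ref{keytheorem}(1), and the basic identity collapses to $d\tau_1=0$, which together with $\tau_3=0$ is case~(2). If $f\neq 0$, then $d\eta=-\tfrac{3}{f}d\tau_1$ is $d$-exact on $M$, so the real Chern class of the $S^1$-bundle $N\to M$ vanishes, the bundle trivialises, and $N\simeq M\times S^1$, placing us in case~(1) with $\tau_3=0$ already in hand. The main delicacy I foresee lies in this second branch: the vanishing-Chern-class argument furnishes only a diffeomorphism, not a Riemannian product; upgrading to an isometric splitting would additionally demand $d\eta=0$ and $s$ constant, which is not manifest from the torsion equations alone and would presumably require further input such as parallelism of $T^1_8$ (reducing the holonomy of $g_\Phi$ to $G_2$) together with alignment of the Lee direction with the $S^1$-generator $X$.
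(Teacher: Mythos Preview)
Your argument is correct and follows essentially the same route as the paper: vanish all pieces of $T^5_{48}$ via Proposition~\ref{keytheorem} to get $\tau_3=0$ and $T^1_7=3\tau_1$, use $dT^1_8\wedge\Phi=0$ to deduce $dT^1_8=0$, split off the vertical part to see that $f$ is constant and $f\,d\eta=-dT^1_7$, and then branch on $f$. Your contraction-with-$X$ derivation of $df=0$ is in fact a cleaner phrasing of the paper's Lie-derivative step.

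One clarification on your closing worry: the theorem as stated only asserts $N^8\simeq M^7\times S^1$ as a diffeomorphism (the paper's own proof says ``the bundle is topologically trivial''), so no isometric splitting is claimed and your exactness argument suffices. The only residual imprecision, present equally in the paper, is that exactness of $d\eta$ kills only the \emph{real} first Chern class, leaving possible torsion in $H^2(M,\mathbb{Z})$; strictly speaking this yields a flat bundle rather than a globally trivial one, but since the paper makes the identical leap you should not regard it as a defect of your proof.
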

\begin{proof}
Since $T^5_{48}=0$ we know that $T^5_7$, $T^5_{14}$,$T^4_7$ and $T^4_{27}$ all vanish. From Proposition \ref{keytheorem} it follows that $\tau_0=-s^{4/3}f$, $\tau_1=-s^{-4/3}(d(s^{4/3})+\frac{2}{3}*_\vp((d\eta)^2_7\w*_\vp \vp) )$, $\tau_2=-s^{-4/3}(d\eta)^2_{14}$ and $\tau_{27}^4=0$. From Proposition \ref{keytheorem} we also get
\[T^1_7=-3s^{-4/3}d(s^{4/3})-2s^{-4/3}*_\vp((d\eta)^2_7\w*_\vp\vp) \]
Furthermore, differentiating $d\Phi=T^1_8 \w \Phi$ we have
\[dT^1_8\w\Phi=0. \]As wedging with $\Phi$ defines an isomorphism of $\Lm^2$ and $\Lm^6$ it follows that $T^1_8$ is closed. Since $\mathcal{L}_X\Phi=0$ we have 
\[d(\iota_Xd\Phi)=0\]
and this shows that 
\[\mathcal{L}_X T^1_8 \w \Phi =d(\iota_Xd\Phi)=0.\]
Thus $f=T^1_8(X)$ is constant and if non-zero then
\[d\eta= -\frac{1}{f}dT^1_7.  \]
Since the latter is exact, the Chern class is zero and the bundle is topologically trivial i.e. $N^8 \simeq M^7\times S^1.$ Otherwise if $f=0$ then $\tau_0=0.$
\end{proof}
In \cite[Theorem B]{Ivanovetal05} Ivanov et al. prove that any compact locally conformally parallel $Spin(7)$-structure fibres over an $S^1$ and each fibre is endowed with a nearly parallel $G_2$-structure i.e. the only non-zero torsion form is $\tau_0.$ Thus, it follows from Proposition \ref{keytheorem} that one can construct many such examples by taking $N^8=M^7\times S^1$ where $M^7$ is a nearly parallel $G_2$-manifold and endow $N^8$ with the product $Spin(7)$-structure. In particular these examples cover case $(1)$ above where the $S^1$ is only acting on the second factor. We also point out that aside from the fact that the cone metric on a nearly parallel $G_2$ manifold has holonomy contained in $Spin(7)$, there exists another Einstein metric, with instead positive scalar curvature, on $(0,\pi)\times M^7$ given by the sine-cone construction:
\[g_{sc}:=dt^2+\sin(t)^2g_{M^7}. \] 
The latter metric however does not seem to have been studied in detail in the literature. The fact that $g_{sc}$ is Einstein is easily deduced since its Riemannian cone is Ricci-flat. Let us now show how situation $(2)$ can arise. The reader might find it helpful to compare the following example with section \ref{flatquotient}. \\ 
\textbf{Example.} As above let $N^8=S^7\times S^1$, where $S^7$ is given the nearly parallel $G_2$-structure induced by restricting $\Phi_0$ to $S^7 \hookrightarrow \R^8.$ The induced $G_2$-structure $\vp_{S^7}$ satisfies
\[d\vp_{S^7}=4\ *_{S^7}\vp_{S^7} \]
and defines the standard round metric on $S^7.$ Consider any free $S^1$ action, generated by a unit vector field $X$, on $S^7$ preserving $\vp_{S^7}$. We can then write
\[\vp_{S^7}=\eta \w \om + \Om^+ \text{\ \ and \ }*_{S^7}\vp_{S^7}=\frac{1}{2}\om\w\om-\eta\w\Om^- \]
cf. \cite{Apostolov2003}.
The intrinsic torsion of the quotient $G_2$-structure on $M^7=\C P^3 \times S^1$, with coordinate $\theta$ on the circle, is then given by
\begin{align*}
d\vp&= 3(-\frac{4}{3}\ d\theta )\w \vp,\\
d*_\vp\vp&=4(-\frac{4}{3}\ d\theta )\w *_\vp\vp-(\frac{2}{3}\om+d\eta)\w \vp,
\end{align*}
confirming that indeed $\tau_0$ and $\tau_3$ vanish but $\tau_1$ and $\tau_2$ do not, cf. (\ref{g2torsion1}) and (\ref{g2torsion2}).
\subsection{The balanced quotient}
Since $T^1_8=0$, from proposition \ref{keytheorem} $(1)$ we have $\tau_0=0$ and $(2)$ gives 
\begin{equation}\label{balancedspin}
\tau_1=-\frac{1}{24}(3s^{-4/3}d(s^{4/3}) +2s^{-4/3}*_\vp((d\eta)^2_7 \w *_\vp \vp) ).
\end{equation} %This imposes
%\[T^5_7=\frac{7}{2}((d\eta)^2_7\w\vp+d(s^{4/3})\w*_\vp\vp). \]
\begin{Rem}
Differentiating the balanced condition $*_\Phi(d\Phi)\w\Phi=0$ we get
\[ \|d\Phi \|^2_{\Phi}vol_\Phi=- (d*_\Phi d\Phi)\w\Phi = (\Delta_{\Phi}\Phi)\w\Phi.\]
In particular this shows that $d\Phi=0$ i.e. $\Phi$ is torsion free iff \[\Delta_\Phi\Phi \w\Phi=0 \]
which is a single scalar PDE.
\end{Rem}
It is well-known that a $Spin(7)$-structure can be equivalently characterised by the existence of a non-vanishing spinor $\psi$, instead of the $4$-form $\Phi$. Following Theorem \ref{spin7thm}, the induced metric has holonomy contained in $Spin(7)$ if and only if the spinor is parallel. From this perspective the action of the Dirac operator $D$ on the spinor was shown to be completely determined by the torsion form $T^1_8$ cf. \cite[($7.21$)]{Ivanovspin7}. As a consequence, it follows that balanced $Spin(7)$-structures are characterised by the fact that they admit harmonic spinors i.e. $D\psi=0$.

In \cite{Lucia2019} the authors construct many such examples on nilmanifolds by adopting a spinorial point of view. 
We instead here describe, via a few simple examples, a construction of balanced $Spin(7)$-structures starting from suitable $G_2$-structures. Henceforth we shall restrict to the case when $s=1$ so that (\ref{balancedspin}) can be equivalently written as
\begin{equation}(d\eta)^2_7=-4*_{\vp}(\tau_1\w *_\vp\vp).\label{curvature27} \end{equation}
\begin{Th}\label{balancedquotient}
$(N^8,\Phi)$ is a free $S^1$-invariant balanced $Spin(7)$-structure if and only if the $G_2$-structure $(M^7,\vp)$ has $\tau_0=0$ and admits a section $\lm\in\Om^2_{14}$ such that 
\[ [\lm - 4*_\vp(\tau_1 \w *_\vp\vp)] \in H^2(M,\mathbb{Z}) \]
or equivalently,
\begin{equation}\label{set} \{\kappa +4*_\vp(\tau_1 \w *_\vp\vp)\ |\ [\kappa]\in H^2(M,\mathbb{Z})  \}\cap \Om^2_{14} \neq \emptyset. \end{equation}
Moreover, the $Spin(7)$-structure on the total space can be written as
\begin{equation}\Phi=\eta \w \vp + *_\vp\vp\label{spin7connection} \end{equation}
where the connection form $\eta$ satisfies $d\eta=\lm-4*_\vp(\tau_1 \w *_\vp\vp)$ i.e.
\begin{equation*} (d\eta)^2_7=-4*_\vp(\tau_1 \w *_\vp\vp)\ \text{\ and \ } (d\eta)^2_{14}=\lm. \label{cur} \end{equation*}
\end{Th}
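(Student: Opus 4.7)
The plan is to apply Proposition \ref{keytheorem} in both directions, specialised to $s \equiv 1$, and to interpret the remaining freedom through the Chern--Weil correspondence between integer classes in $H^2(M,\mathbb{Z})$ and principal $S^1$-bundles with connection.

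For the forward direction, assume $(N^8,\Phi)$ is a free $S^1$-invariant balanced $Spin(7)$-structure. The vanishing of $T^1_8$ decomposes into $f = 0$ and $T^1_7 = 0$; by Proposition \ref{keytheorem}(1) the first forces $\tau_0 = 0$, while Proposition \ref{keytheorem}(2) with $s = 1$ and $T^1_7 = 0$ yields the already derived relation (\ref{balancedspin}), which is equivalent to (\ref{curvature27}) by Lemma \ref{usefulidentities}(2). Setting $\lm := (d\eta)^2_{14} \in \Om^2_{14}$, one has $d\eta = \lm - 4*_\vp(\tau_1 \w *_\vp\vp)$, and Chern--Weil puts $[d\eta] \in H^2(M,\mathbb{Z})$, which is exactly the required integrality.

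For the reverse direction, the integrality hypothesis implicitly makes $\om := \lm - 4*_\vp(\tau_1 \w *_\vp\vp)$ closed, and being closed and integral it is the curvature of some principal $S^1$-bundle $\pi : N \to M$ equipped with a connection $\eta$ satisfying $d\eta = \om$ on the nose. Define $\Phi$ on $N$ by (\ref{spin7connection}). At each point one can pick an orthonormal frame on $M$ in which $\vp = \vp_0$ and complete it with $\eta$ to a frame on $N$, so that $\Phi$ matches the model $\Phi_0$ of (\ref{equationspin7}) pointwise; thus $\Phi$ is a genuine $Spin(7)$-structure, manifestly $S^1$-invariant because $\vp$ and $*_\vp\vp$ are basic and $\eta$ is a principal connection. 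To verify that it is balanced, $f = -\tau_0 = 0$ by Proposition \ref{keytheorem}(1), and substituting $(d\eta)^2_7 = -4*_\vp(\tau_1 \w *_\vp\vp)$ into Proposition \ref{keytheorem}(2) with $s = 1$ gives
\begin{equation*}
7T^1_7 \;=\; 24\tau_1 \;-\; 8*_\vp\bigl(*_\vp(\tau_1 \w *_\vp\vp) \w *_\vp\vp\bigr) \;=\; 24\tau_1 - 24\tau_1 \;=\; 0,
\end{equation*}
the middle equality coming from applying $*_\vp$ to Lemma \ref{usefulidentities}(2). Hence $T^1_8 = 0$ as required.

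The only genuine algebraic point is the equivalence between (\ref{balancedspin}) and (\ref{curvature27}), which reduces to the fact that the composition $\beta \mapsto *_\vp(*_\vp(\beta \w *_\vp\vp) \w *_\vp\vp)$ on $\Lm^2_7$ is a nonzero scalar multiple of the identity, a direct consequence of Lemma \ref{usefulidentities}(2) (equivalently, the Schur lemma applied to the irreducible $G_2$-module $\Lm^2_7$). Everything else is bookkeeping through Proposition \ref{keytheorem} together with the standard Chern--Weil correspondence between integral closed $2$-forms and principal $S^1$-bundles with prescribed curvature.
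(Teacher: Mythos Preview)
Your proof is correct and follows essentially the same route as the paper's own argument, which is extremely terse: it says only that for the \emph{if} direction one chooses a connection $\eta$ with $d\eta=\lm-4*_\vp(\tau_1\w*_\vp\vp)$ and defines $\Phi$ by (\ref{spin7connection}), and for the \emph{only if} direction one sets $\lm=(d\eta)^2_{14}$. You have simply unpacked the details the paper leaves implicit---the Chern--Weil step, the pointwise verification that $\Phi$ is a $Spin(7)$-structure, and the explicit check via Lemma~\ref{usefulidentities}(2) that $T^1_7=0$ in the reverse direction---so there is no substantive difference in strategy.
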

\begin{proof}
The \textit{if} statement is clear since given $\lm$ we can always choose a connection $\eta$ with $d\eta=\lm-4*_\vp(\tau_1 \w *_\vp\vp)$. Then define $\Phi$ by (\ref{spin7connection}). The \textit{only if} statement follows by setting $\lm=(d\eta)^2_{14}.$
\end{proof}
The reader might find such a theorem of little practical use in general. However, as we shall illustrate below via concrete examples, when $M^7$ is a nilmanifold Theorem \ref{balancedquotient} provides a systematic way of constructing balanced $Spin(7)$-structures.\\
\iffalse
\textbf{Example 1.} Let $M=\mathbb{T}^7$ with the flat coframing $\langle e^0,e^2,e^3,e^4,e^5,e^6,e^7\rangle$ and define a torsion free $G_2$-structure by
\[\vp=e^{032}-e^{045}-e^{067}-e^{346}-e^{375}-e^{247}-e^{256} .\]
The $2$-form $e^{23}-e^{45} \in \Lm^2_{14}$ defines a non-zero element of $H^{2}(\mathbb{T}^7,\mathbb{Z})$ and thus a non-trivial $S^1$-bundle. The total space $N$ is in fact diffeomorphic to the nilmanifold with nilpotent Lie algebra $(0,0,0,0,0,0,0,12+34)$. We denote a connection form $\eta$ on $N$ by $e^1$ such that $d\eta=e^{23}-e^{45}$. The induced $Spin(7)$-form can now be written in the standard form (\ref{equationspin7}) with $dx_i$ replaced by $e^i.$ From $(1)$ and $(2)$ of Proposition \ref{keytheorem}, it follows that $T^1_8=0.$\\ This simple construction can be applied to any $G_2$-structure with vanishing $\tau_0$ and $\tau_1$, by setting $s$ to be constant and choosing classes in $H^2(M,\mathbb{Z})$ that have representatives in $\Om^2_{14}$. We now consider the more general situation when $\tau_1\neq0.$ \\ \fi
\textbf{Example.} Let $M^7=B^5\times \mathbb{T}^2$, where $B$ is a nilmanifold with an orthonormal coframing given by $e^i$ for $i=0,...,4$ and satisfying 
\begin{gather*}
de^i=0, \ \  \text{for}\ i\neq 4\\
de^4=e^{02}+e^{31},
\end{gather*}
and for the flat $\mathbb{T}^2$ by $e^6$ and $e^7.$ The $G_2$-structure defined by
\[\vp=e^{137}+e^{104}+e^{162}+e^{306}+e^{324}-e^{702}-e^{746}. \]
has $\tau_0=0$. Hence from (\ref{curvature27}), to construct a balanced $Spin(7)$-structure we need to find a connection $\eta$ whose $\Lm^2_7$-curvature component satisfies
\begin{align*}
(d\eta)^2_7
&=-4*_{\vp}(\tau_1\w *_\vp\vp)\\
&=\frac{2}{3}(e^{03}+e^{12}-e^{47}).
\end{align*}
Choosing $(d\eta)^2_{14}$ to be either of following $2$-forms in $\Om^2_{14}$:
\begin{gather*}
\frac{1}{3}(e^{03}+e^{12}+2e^{47}),\\
\frac{2}{3}(2e^{12}-e^{03}+e^{47})
\end{gather*}
gives connections with curvature forms $e^{03}+e^{12}$ and $2e^{12}$ respectively, and thus we obtain two distinct balanced $Spin(7)$-structures. Denoting $\eta$ by $e^5$ the $Spin(7)$ form can once again be written in the standard form (\ref{equationspin7}). This construction shows that given a balanced $Spin(7)$-structure on an $S^1$-bundle we can modify the $\Lm^2_{14}$-component of the curvature form while keeping its $\Lm^2_7$-component, already determined by $\tau_1$, unchanged to construct a new balanced structure.

Suppose that we have fixed $d\eta=de^5=2e^{12}.$ We can now take the $S^1$-quotient with respect to the Killing vector field $e_4.$
In other words, the total space can be viewed as a different circle bundle with the new connection form $\tilde{\eta}:=e^4.$ We can repeat the above procedure with the new $G_2$-structure $\tilde{\vp}:={e_4}\ip \Phi$, explicitly given by
\[\tilde{\vp}=e^{501}+e^{523}+e^{567}+e^{026}+e^{073}-e^{127}-e^{136}, \]
which of course has $\tilde{\tau}_0=0$. Once again to construct a balanced $Spin(7)$-structure we need a connection $1$-form $\xi$ satisfying
\begin{align*}
(d\xi)^2_7
&=(d\tilde{\eta})^2_7\\
&=\frac{2}{3}(e^{02}+e^{31}-e^{57}).
\end{align*}
If we choose 
\[(d\xi)^2_{14}=(d\tilde{\eta})^2_{14}+e^{51}+2e^{26}+e^{37}  \]
then $d\xi=e^{02}+e^{31}+e^{51}+2e^{26}+e^{37}$ indeed defines an element in $H^2(\tilde{M},\mathbb{Z})$. Thus this gives yet another balanced $Spin(7)$-structure. These three examples were found in \cite{Lucia2019} denoted by $\mathcal{N}_{6,22}$, $\mathcal{N}_{6,23}$ and $\mathcal{N}_{6,24}$, by instead using the spinorial approach described above and computing the Dirac operator. 

The above examples in fact illustrate a new procedure for constructing balanced $Spin(7)$-structures on nilmanifolds: Starting from an $S^1$-invariant balanced $Spin(7)$-structure on a nilmanifold we know that the quotient $G_2$-structure $\vp$ has $\tau_0=0$. Given that the de Rham complex of the quotient nilmanifold $P^7$ is completely determined by the Chevalley-Eilenberg complex of the associated nilpotent Lie algebra, it is relatively straightforward to compute the set (\ref{set}), via say \textsc{Maple}. Thus, by choosing distinct $\lambda$s we can classify all invariant balanced $Spin(7)$-structures on \textit{different} nilmanifolds which arise as circle bundles over $(P^7,\vp)$. A general classification however appears to be quite hard. Closed $G_2$-structures on nilpotent Lie algebras, hence with $\tau_0=0$, were classified in \cite{Conti2011a}. Although a classification of $7$-dimensional nilpotent Lie algebras is known cf. \cite{GongNil7}, those admitting $G_2$-structures with only vanishing $\tau_0$ is still unknown.

Having encountered several examples of $Spin(7)$-structures
it seems worth making a brief digression from our main example and derive some curvature formulae of $Spin(7)$-structures in terms of the torsion forms, rather than the metric, that the reader might find quite practical in specific examples.
\section{Ricci and Scalar curvatures}
In this section we derive formulae for the Ricci and scalar curvatures of $Spin(7)$-structures in terms of the torsion forms. As a corollary we show that under the free $S^1$ action hypothesis and that the circle orbits have constant size, $(d\eta)^2_7$ can be interpreted as the mean curvature vector of the circle orbits.\\
Formulae for the Ricci and scalar curvatures of $G_2$-structures in terms of the torsion forms seem to have first appeared in \cite[$(4.28),(4.30)$]{Bryant06someremarks} and for the $Spin(7)$ case in \cite[$(1.5),(7.20)$]{Ivanovspin7}. The approach taken in each paper to derive the curvature formulae differ greatly. While Ivanov uses the equivalent description of $Spin(7)$-structures as corresponding to the existence of certain parallel spinors, Bryant uses a more representational theoretic argument. In \cite{Ivanovspin7}, however, it is not obvious from the Ricci formula that it is a symmetric tensor and moreover the presence of a term involving the covariant derivative of the torsion form makes explicit computations quite hard. We instead adapt the technique outline in \cite[Remark 10]{Bryant06someremarks} to the $Spin(7)$ setting and derive an alternative formula.
\begin{Prop}\label{ricciandscal}
The Ricci and scalar curvatures of a $Spin(7)$-structure $(N,\Phi)$ are given by
\begin{align*} % this is proved in 27th july notes
Ric(g_{\Phi})=&\bigg(\frac{5}{8}\ \delta T^1_8 +\frac{3}{8}\ \|T^1_8 \|^2_\Phi -\frac{2}{7}\ \|T^5_{48}\|^2_\Phi\bigg)g_{\Phi}\\
&+ \textup{\textsf{j}}\bigg(-3\ \delta(T^1_8 \w \Phi) + 4\ \delta T^5_{48} -2\  (T^1_8 \w *_\Phi T^5_{48})-\frac{9}{4} *_\Phi(T^1_8 \w \Phi)\w T^1_8) \bigg)\\
&+\frac{1}{2}\ g_{\Phi}(\cdot\ \ip *_\Phi T^5_{48},\cdot\ \ip *_\Phi T^5_{48})
\end{align*}
\[Scal(g_\Phi)=\frac{7}{2}\ \delta T^1_8+\frac{21}{8}\ \|T^1_8 \|^2_{\Phi}-\frac{1}{2}\ \|T^5_{48} \|^2_{\Phi}.\]
where $\delta:=-*_\Phi d *_\Phi$ is the codifferential of $\Phi$.
\end{Prop}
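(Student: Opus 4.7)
The plan is to adapt Bryant's technique from Remark $10$ of \cite{Bryant06someremarks} to the $Spin(7)$ setting, as the author indicates. The central observation is that, since $\Phi$ is $Spin(7)$-invariant, $\nabla \Phi$ is a section of the subbundle of $T^*N \otimes \Lm^4$ associated to $\R^8 \otimes \mathfrak{spin}(7)^\perp$, which is $56$-dimensional and matches the dimension of the intrinsic torsion. By Theorem \ref{spin7thm}, the $Spin(7)$-equivariant map $\nabla \Phi \mapsto d\Phi$ is a bundle isomorphism, so $\nabla \Phi$ is a universal algebraic expression in $T^1_8$ and $T^5_{48}$.

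First I would write $\nabla \Phi$ explicitly in terms of $T^1_8$ and $T^5_{48}$, by matching $Spin(7)$-equivariant projections on both sides of $d\Phi = T^1_8 \w \Phi + T^5_{48}$ via the operators $\al \mapsto \al\w\Phi$ and $\al \mapsto *_\Phi(\al\w\Phi)$ (analogues of those in Lemma \ref{usefulidentities}). Next I would compute the Hodge Laplacian $\Delta\Phi = dd^*\Phi + d^*d\Phi$ directly from (\ref{spin7torsion}); self-duality $*_\Phi\Phi = \Phi$ gives $d^*\Phi = -*_\Phi(T^1_8 \w \Phi + T^5_{48})$, and applying one more $d$ and $d^*$, using $d(T^1_8 \w \Phi) = dT^1_8 \w \Phi - T^1_8 \w T^5_{48}$, produces $\Delta\Phi$ as a combination of first-order terms $\delta T^1_8$, $\delta T^5_{48}$, $\delta(T^1_8 \w \Phi)$, together with explicit quadratic torsion terms.

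Then I would invoke the Weitzenb\"ock formula
\begin{equation*}
\Delta\Phi \;=\; \nabla^*\nabla \Phi + \mathcal{R}(\Phi),
\end{equation*}
where $\mathcal{R}$ is the curvature operator on $4$-forms. Under the decomposition $\Lm^4 = \Lm^4_1 \oplus \Lm^4_7 \oplus \Lm^4_{27} \oplus \Lm^4_{35}$, the isomorphism $\textup{\textsf{i}}$ identifies $\Lm^4_1 \oplus \Lm^4_{35}$ with $\langle g_\Phi \rangle \oplus S^2_0$, i.e.\ the space of symmetric $(0,2)$-tensors. Since $\Phi$ is annihilated by $\mathfrak{spin}(7) \subset \Lm^2$, only the $\Lm^2_7 \cong \mathfrak{spin}(7)^\perp$ part of $R(X,Y)$ contributes to $R(X,Y)\cdot\Phi$; tracking this through the decomposition of $S^2(\Lm^2)$ forces the Weyl-curvature contribution to $\mathcal{R}(\Phi)$ to lie entirely in $\Lm^4_7 \oplus \Lm^4_{27}$, so $\textup{\textsf{j}}(\mathcal{R}(\Phi))$ is a universal nonzero multiple of $\Ric(g_\Phi)$. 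Applying $\textup{\textsf{j}}$ to the Weitzenb\"ock identity then isolates $\Ric(g_\Phi)$ as a combination of $\textup{\textsf{j}}(\Delta\Phi)$ and $\textup{\textsf{j}}(\nabla^*\nabla\Phi)$, both already computed; the scalar curvature formula follows by tracing with $g_\Phi$, equivalently by pairing with $\Phi$ using $\textup{\textsf{i}}(g_\Phi) = 8\Phi$.

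The main obstacle is representation-theoretic bookkeeping. Concretely: (a) fixing the correct coefficients in the algebraic expression for $\nabla \Phi$ in terms of $T^1_8$ and $T^5_{48}$; (b) verifying carefully that the Weyl part of $R$ produces no $\Lm^4_1 \oplus \Lm^4_{35}$ component when acting on $\Phi$, and computing the precise proportionality constant between $\textup{\textsf{j}}(\mathcal{R}(\Phi))$ and $\Ric(g_\Phi)$; and (c) recasting the various quadratic torsion terms emerging from $\textup{\textsf{j}}(\nabla^*\nabla\Phi)$ and from the two exterior derivatives in the precise form stated, in particular producing $*_\Phi(T^1_8 \w \Phi) \w T^1_8$ and the non-$\textup{\textsf{j}}$-type symmetric tensor $g_\Phi(\,\cdot\ \ip *_\Phi T^5_{48},\,\cdot\ \ip *_\Phi T^5_{48})$, which corresponds to a quadratic contraction of $T^5_{48}$ that does not factor through the $\Lm^4_1 \oplus \Lm^4_{35}$ projection but through $S^2(\Lm^3_8)$. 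As consistency checks, setting $T^1_8 = T^5_{48} = 0$ must recover $\Ric(g_\Phi) = 0$, and tracing the full Ricci formula must reproduce the stated scalar curvature, which additionally can be cross-checked against Ivanov's formula \cite{Ivanovspin7}.
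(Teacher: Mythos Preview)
Your Weitzenb\"ock route is \emph{not} what the paper does, and it is also not what Bryant actually does in Remark~10 of \cite{Bryant06someremarks}, despite your opening sentence. The paper's argument is purely representation-theoretic bookkeeping followed by coefficient-matching on examples: one introduces the $Spin(7)$-modules $V_k$ defined by
\[
(\mathfrak{gl}(8,\R)/\mathfrak{spin}(7))\otimes S^k(\R^8)=V_k \oplus (\R^8\otimes S^{k+1}(\R^8)),
\]
observes that the second-order part of $Scal$ lives in the unique trivial summand of $V_2$ (spanned by $\delta T^1_8$), the quadratic part lives in the two trivial summands of $S^2(V_1)$ (spanned by $\|T^1_8\|^2$ and $\|T^5_{48}\|^2$), and similarly counts copies of $V_{0,0,2}\cong S^2_0$ for the traceless Ricci. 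The undetermined constants are then fixed by evaluating on a handful of explicit $Spin(7)$-structures in \textsc{Maple}. No Weitzenb\"ock formula, no analysis of $\mathcal{R}(\Phi)$, no tracking of Weyl curvature.

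Your approach can in principle work, but it is longer and its linchpin is your claim~(b): that the Weyl part of the Riemann tensor contributes nothing to the $\Lm^4_1\oplus\Lm^4_{35}$ component of $\mathcal{R}(\Phi)$. This is not automatic. Under $Spin(7)$ the $300$-dimensional Weyl module \emph{does} contain a copy of $V_{0,0,2}\cong\Lm^4_{35}$, so Schur's lemma alone does not kill the contribution; you would have to compute the actual coefficient of the equivariant map $W\mapsto(\mathcal{R}_W(\Phi))^4_{35}$ and show it vanishes. If it does not vanish, the Weitzenb\"ock identity gives you only a relation between Ricci, a piece of Weyl, and torsion, and you cannot isolate $\Ric$. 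The paper's strategy sidesteps this completely: it never asks which curvature components appear in $\mathcal{R}(\Phi)$, only which torsion expressions \emph{can} appear in $\Ric$, and then checks enough examples to determine the constants.
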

\begin{proof}
Following Bryant's argument in \cite{Bryant06someremarks} for the $G_2$ case, we first define the two $Spin(7)$-modules $V_1$ and $V_2$ by
\[(\mathfrak{gl}(8,\R)/\mathfrak{so}(7))\otimes S^k(\R^8)=V_k \oplus (\R^8 \otimes S^{k+1}(\R^8)) ,\]
where $S^k(\R^8)$ denotes the $k^{th}$ symmetric power.
We shall refer to these modules to also mean the corresponding associated vector bundles on $N$. Representing irreducible $Spin(7)$-modules by the highest weight vector we have the following decomposition:
\begin{gather*}
V_1=V_{0,0,1}\oplus V_{1,0,1},\\
V_2=V_{0,0,0}\oplus V_{1,0,0}\oplus V_{0,1,0}\oplus V_{1,1,0}\oplus V_{2,0,0}\oplus V_{0,2,0}\oplus 2V_{0,0,2}\oplus V_{1,0,2},\\
S^2(V_1)=2V_{0,0,0}\oplus V_{1,0,0}\oplus V_{0,1,0}\oplus 2V_{1,1,0}\oplus 2V_{2,0,0}\oplus V_{0,2,0}\oplus 4V_{0,0,2}\oplus 2V_{1,0,2}\oplus V_{2,0,2}.
\end{gather*}
It is known that the second order term of the scalar curvature values in the trivial component of $V_2$ of which there is only one. This is spanned by $\delta T^1_8$. The first order terms are at most quadratic in sections of $V_1$ of which there are only two components. These are just the norm squared of the torsion forms: $\|T^1_8\|^2_\Phi$ and $\|T^5_{48}\|^2_\Phi$. So the scalar curvature can be expressed in terms of these three terms and to determine the coefficients it suffices to test it on a few examples. A similar argument applies for the traceless part of the Ricci tensor. The second order terms correspond to sections of the module $V_{0,0,2}\cong S^2_0(\R^8)$  in $V_2$ and there are exactly two of those. These are spanned by the projections of $\delta(T^1_8 \w \Phi)$  and $\delta T^5_{48}$. For the first order terms, they are given by sections of the module $V_{0,0,2}$ in $S^2(V_1).$ There are in fact four of those; one quadratic in $T^1_8$, two quadratic in $T^5_{48}$ and one mixed term. All but one quadratic term in $T^5_{48}$ appear in the Ricci formula. Again to determine the coefficients it suffices to test the formula on a few examples. This can be done quite easily using \textsc{Maple}.
\end{proof}
\noindent From the results of section \ref{quotientconstruction} we have the following lemma.
\begin{Lemma}
In the $S^1$-invariant setting, $\delta T^1_8$, $\|T^1_8 \|^2_\Phi$ and $\|T^5_{48}\|^2_\Phi$  are given in terms of the data $(M^7,\vp,\eta,s)$ by
	%\begin{align}
	%T=&\ \frac{1}{6} \tau_0 s^{2/3}\vp -s^{-2}\eta \w (d\eta)^2_7-\frac{1}{2}s^{-2}\eta\w*_\vp(d(s^{4/3}) \w *_\vp \vp)\\& + s^{-2}\eta \w (d\eta)^2_{14}+s^{-2/3}\eta \w\tau_2  +s^{2/3}*_\vp(\tau_1\w\vp)\nonumber \\&+\frac{1}{3}s^{-2/3}*_\vp(*_\vp((d\eta)^2_7\w *_\vp\vp)\w\vp)-s^{2/3}\tau_3\nonumber
	%\end{align}
\begin{equation}
\delta T^1_8=\frac{1}{7} s^{-4/3}\delta_\vp(24 s^{2/3} \tau_1+4s^{-1/3}ds+2s^{-2/3}*_{\vp}((d\eta)^2_7\w*_\vp\vp))) 
\end{equation}
\begin{equation}
\|T^1_8 \|^2_\Phi=s^{-2/3}\tau_0^2+\frac{1}{49}s^{-2/3}\|24\tau_1+4s^{-1}ds+2s^{-4/3}*_\vp((d\eta)^2_7\w*_\vp\vp) \|^2_\vp
\end{equation}
	%\begin{align}
	%\|T\|^2_\Phi=& \frac{7}{36}\tau_0^2 s^{-2/3}+s^{-2/3}\|\tau_3\|^2_\vp+s^{-4/3}\|s^{-1}(d\eta)^2_{14}+s^{1/3}\tau_2 \|^2_\vp\\ \ &+ s^{-10/3}\|(d\eta)^2_7+\frac{1}{2}*_\vp(d(s^{4/3})\w*_\vp\vp) \|^2_\vp\nonumber \\ \ &+4\|s^{2/3}\tau_1+\frac{1}{3}s^{-2/3}*_\vp((d\eta)^2_7 \w *_\vp\vp) \|^2_\vp\nonumber
	%\end{align}
\begin{align}
\|T^5_{48}\|^2_\Phi=& s^{-2/3}\|\tau_3\|^2_\vp+s^{-4/3}\|s^{-1}(d\eta)^2_{14}+s^{1/3}\tau_2 \|^2_\vp\\ \ &+ s^{-10/3}\|\frac{8}{7}(d\eta)^2_7+\frac{4}{7}*_\vp(d(s^{4/3})\w*_\vp\vp)+\frac{4}{7}s^{4/3}*_\vp(\tau_1\w*_\vp\vp) \|^2_\vp\nonumber \\ \ &+4\|\frac{3}{7}s^{2/3}\tau_1+\frac{2}{7}s^{-2/3}*_\vp((d\eta)^2_7 \w *_\vp\vp)+\frac{3}{7}s^{-2/3}d(s^{4/3}) \|^2_\vp\nonumber
\end{align}
where $\delta_\vp$ is the codifferential of $\vp$ acting on $k$-forms by $\delta_\vp = (-1)^k *_\vp d *_\vp.$ 
	
\end{Lemma}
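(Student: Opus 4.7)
The strategy is direct substitution followed by metric bookkeeping: insert the explicit expressions for $T^1_8$ and $T^5_{48}$ from Proposition \ref{keytheorem} into each quantity, then convert the $g_\Phi$-norms (resp.\ the codifferential $\delta = -*_\Phi d *_\Phi$) into $g_\vp$-objects on $M^7$ via the orthogonal decomposition $g_\Phi = s^{-2}\eta^2 + s^{2/3}g_\vp$. I would work in a local orthonormal coframe $\{e^a\}_{a=0}^{7}$ for $g_\Phi$ with $e^0 = s^{-1}\eta$ and $e^i = s^{1/3}\tilde e^i$, where $\{\tilde e^i\}$ is $g_\vp$-orthonormal; this produces the two pointwise rules used throughout: for any basic (horizontal) $k$-form $\al$, $\|\al\|^2_\Phi = s^{-2k/3}\|\al\|^2_\vp$ and $\|\eta\w\al\|^2_\Phi = s^{2-2k/3}\|\al\|^2_\vp$.

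For $\|T^1_8\|^2_\Phi$: decompose $T^1_8 = f\eta + T^1_7$ with $f = -s^{-4/3}\tau_0$; since $f\eta$ and $T^1_7$ are $g_\Phi$-orthogonal, $\|T^1_8\|^2_\Phi = s^2 f^2 + s^{-2/3}\|T^1_7\|^2_\vp$, and substituting $7T^1_7$ from Proposition \ref{keytheorem}(2) (noting $3s^{-4/3}d(s^{4/3}) = 4s^{-1}ds$) yields the stated formula. For $\|T^5_{48}\|^2_\Phi$: decompose $T^5_{48} = T^5_7 + T^5_{14} + \eta\w(T^4_7 + T^4_{27})$; the four summands are pairwise $g_\Phi$-orthogonal (distinct $G_2$-modules, and $\eta$ is orthogonal to the horizontal). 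For the horizontal $\Lm^5_\bullet$ summands, apply $*_\vp$ to move to $\Lm^2_\bullet$ (a $g_\vp$-isometry), using $*_\vp(\al\w\vp) = -\al$ for $\al\in\Lm^2_{14}$ on $T^5_{14}$ and $*_\vp(\al\w\vp) = 2\al$ for $\al\in\Lm^2_7$ on $T^5_7$ (the $+2,-1$ eigenvalue characterisation from the proof of Lemma \ref{hodgestarlemma}). For the $\eta$-wedge pieces, use $T^4_{27} = -*_\vp\tau_{3}$ (Hodge-star isometry) and read off $T^4_7 = (T^1_7 - 3\tau_1)\w\vp$ from the $\eta$-part of the identity $d\Phi = T^1_8\w\Phi + T^5_{48}$ after substituting $d\vp = \tau_0*_\vp\vp + 3\tau_1\w\vp + *_\vp\tau_{3}$; the identity $\|\gamma\w\vp\|^2_\vp = 4\|\gamma\|^2_\vp$ for $\gamma\in\Lm^1$ handles the last summand.

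For $\delta T^1_8$: by Lemma \ref{hodgestarlemma} one has $*_\Phi T^1_8 = fs^{10/3}\vol_\vp - s^{2/3}\eta\w *_\vp T^1_7$. The $S^1$-invariance of $f$ and $s$ forces $d(fs^{10/3})$ to be horizontal, so $d(fs^{10/3}\vol_\vp) = 0$ identically (there are no horizontal $8$-forms on a rank-$7$ base); likewise $d\eta\w *_\vp T^1_7$ is a basic $8$-form on $N^8$ and hence vanishes. Expanding $d(-s^{2/3}\eta\w *_\vp T^1_7)$ and applying $*_\Phi$ via Lemma \ref{hodgestarlemma} (in particular $*_\Phi(\eta\w\vol_\vp) = s^{-4/3}$) collapses everything to $\delta T^1_8 = -s^{-4/3}g_\vp(d(s^{2/3}),T^1_7) + s^{-2/3}\delta_\vp T^1_7 = s^{-4/3}\delta_\vp(s^{2/3}T^1_7)$, from which the stated formula follows upon substituting $7T^1_7$.

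The principal obstacle is combinatorial rather than conceptual: keeping the various $s$-powers consistent when passing between $g_\Phi$- and $g_\vp$-norms of horizontal and $\eta$-wedge pieces, and recognising that each $\Lm^k_\ell$ summand is most cleanly represented via the adapted Hodge-star isomorphism (as a $2$-form or $1$-form) before taking norms -- a direct expansion on $5$-forms or $4$-forms produces unwieldy cross-terms such as $g_\vp(d(s^{4/3})\w *_\vp\vp,\ \tau_1\w *_\vp\vp)$ which must then be simplified via the contractions of Lemma \ref{usefulidentities}.
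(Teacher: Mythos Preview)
Your proposal is correct and follows essentially the same approach as the paper, which records the proof merely as ``a straightforward albeit long computation using the expressions for the torsion forms of the $Spin(7)$-structure from Proposition \ref{keytheorem}.'' You have supplied precisely the details the paper omits: the conformal scaling rules $\|\al\|^2_\Phi = s^{-2k/3}\|\al\|^2_\vp$ and $\|\eta\w\al\|^2_\Phi = s^{2-2k/3}\|\al\|^2_\vp$ for basic $k$-forms, the $G_2$-orthogonal decomposition of each torsion piece, and the reduction of $\delta T^1_8$ to $s^{-4/3}\delta_\vp(s^{2/3}T^1_7)$ via Lemma \ref{hodgestarlemma} and the vanishing of basic $8$-forms --- all of which check out against the stated formulae.
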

\begin{proof}
This is a straightforward albeit long computation using the expressions for the torsion forms of the $Spin(7)$-structure from Proposition \ref{keytheorem}.
\end{proof}
Of course these formulae are far from practical to compute the scalar curvature but nonetheless in the case of Riemannian submersions they do simplify considerably.
\begin{Cor}
In the case of a Riemannian submersion i.e. $s=1$, 
\begin{align*}Scal(g_\Phi)=Scal(g_\vp)&-\frac{1}{2}\|d\eta \|^2_\vp-g_\vp((d\eta)^2_{14} ,\tau_2) +\delta_\vp(*_\vp((d\eta)^2_7 \w *_\vp \vp))\\&+4g_\vp(*_\vp\tau_1, (d\eta)^2_7 \w *_\vp \vp). \end{align*}
\end{Cor}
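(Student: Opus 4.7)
The plan is to specialise Proposition \ref{ricciandscal} to the Riemannian-submersion case $s=1$, substitute the $G_2$-data expressions from the preceding lemma, and match the resulting expansion against Bryant's scalar-curvature formula for $G_2$-structures. With $s=1$ one has $d(s^{4/3})=0$ and every $s$-power collapses, so the lemma reduces $\delta T^1_8$, $\|T^1_8\|^2_\Phi$ and $\|T^5_{48}\|^2_\Phi$ to polynomial expressions in $\tau_0,\tau_1,\tau_2,\tau_3$ and the curvature components $(d\eta)^2_7,(d\eta)^2_{14}$, together with a single divergence. Substituting these into
\[ Scal(g_\Phi)=\tfrac{7}{2}\delta T^1_8+\tfrac{21}{8}\|T^1_8\|^2_\Phi-\tfrac{1}{2}\|T^5_{48}\|^2_\Phi \]
and expanding each squared norm bilinearly produces contributions of three kinds: pure $G_2$-torsion quantities, squared curvature components, and mixed curvature--torsion pairings.

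Next, I would invoke Bryant's formula
\[ Scal(g_\vp)=12\,\delta_\vp\tau_1+\tfrac{21}{8}\tau_0^2+30\|\tau_1\|^2_\vp-\tfrac{1}{2}\|\tau_2\|^2_\vp-\tfrac{1}{2}\|\tau_3\|^2_\vp \]
(cf.\ (4.28) of \cite{Bryant06someremarks}) and verify that the pure-torsion block of the expansion equals $Scal(g_\vp)$ exactly. Two ancillary identities make this possible: the pointwise evaluation $\|\alpha\w*_\vp\vp\|^2_\vp=3\|\alpha\|^2_\vp$ for $\alpha\in\Lm^1_7$ or $\alpha\in\Lm^2_7$ (and therefore $\|*_\vp(\alpha\w*_\vp\vp)\|^2_\vp=3\|\alpha\|^2_\vp$, since $*_\vp$ is an isometry), and the adjointness
\[ g_\vp\bigl(\tau_1,*_\vp((d\eta)^2_7\w*_\vp\vp)\bigr)=g_\vp\bigl(*_\vp\tau_1,(d\eta)^2_7\w*_\vp\vp\bigr), \]
which follows from $**=\mathrm{id}$ in dimension seven together with associativity of the wedge.

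The leftover contributions then assemble into the four explicit terms on the right-hand side of the claim: $-\tfrac{1}{2}\|d\eta\|^2_\vp$ (combining $\|(d\eta)^2_7\|^2_\vp$ and $\|(d\eta)^2_{14}\|^2_\vp$ coming in from several places), the mixed $-g_\vp((d\eta)^2_{14},\tau_2)$ (the cross term in $-\tfrac{1}{2}\|(d\eta)^2_{14}+\tau_2\|^2_\vp$), the divergence $\delta_\vp(*_\vp((d\eta)^2_7\w*_\vp\vp))$ (directly from $\tfrac{7}{2}\delta T^1_8$), and the cross pairing $4\,g_\vp(*_\vp\tau_1,(d\eta)^2_7\w*_\vp\vp)$ (collecting one cross term from $\|T^1_8\|^2_\Phi$ and two from $\|T^5_{48}\|^2_\Phi$, reconciled by the adjointness identity above). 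The main obstacle is the numerical bookkeeping: the most delicate check is that the $\|\tau_1\|^2_\vp$-contributions $\tfrac{21}{8}\cdot\tfrac{576}{49}$ from $\tfrac{21}{8}\|T^1_8\|^2_\Phi$ and $-\tfrac{1}{2}\cdot\tfrac{84}{49}$ from $-\tfrac{1}{2}\|T^5_{48}\|^2_\Phi$ (the latter computed via $\|\alpha\w*_\vp\vp\|^2=3\|\alpha\|^2$ applied to $\tau_1$) sum to exactly Bryant's coefficient $30$; analogous verifications that the $\|(d\eta)^2_7\|^2_\vp$-coefficient collapses to $-\tfrac{1}{2}$ and that the pairing coefficient comes out to $4$ close the argument.
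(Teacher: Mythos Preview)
Your proposal is correct and follows exactly the route the paper takes: the paper's own proof is a one-sentence reference to combining Proposition~\ref{ricciandscal}, the preceding lemma (specialised to $s=1$), and Bryant's formula $(4.28)$ for $Scal(g_\vp)$, and you have simply written out that combination in detail. Your auxiliary identities $\|\alpha\wedge*_\vp\vp\|^2_\vp=3\|\alpha\|^2_\vp$ and the equality of the three pairings (which all reduce to $\tau_1\wedge(d\eta)^2_7\wedge*_\vp\vp$) are correct, and the numerical checks you highlight---the $\|\tau_1\|^2$ coefficient $\tfrac{216}{7}-\tfrac{42}{49}=30$, the $\|(d\eta)^2_7\|^2$ coefficient $\tfrac{9}{14}-\tfrac{56}{49}=-\tfrac12$, and the cross-term coefficient $\tfrac{36}{7}-\tfrac{56}{49}=4$---all go through.
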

\begin{proof}
This follows by combining the above lemma with our formula for scalar curvature and the one in the $G_2$ case from \cite[(4.28)]{Bryant06someremarks}.
\end{proof}
\begin{Rem}
Comparing the above formula with the general formula for scalar curvatures in Riemannian submersions cf. \cite[$(9.37)$]{Besse2008}, we can geometrically interpret the anti-instanton part of the curvature form: $$*_\vp((d\eta)^2_7\w *_\vp\vp)$$ as the dual with respect to $g_\vp$ of the `mean' curvature vector of the $S^1$ fibres. 
For an immersed submanifold of codimension greater than one, the mean curvature is defined by a normal vector, rather than a scalar, cf. \cite[$(1.73)$]{Besse2008}. In our present situation the `mean' curvature of the circle fibres is determined by a vector in the rank $7$ normal bundle, which we can identify with $(d\eta)^2_7$. Therefore it vanishes if and only if the circles are geodesics. Of course the word `mean' here is redundant as our submanifolds are only one dimensional.
\end{Rem}
We now turn to our main example namely the $S^1$ quotient of the Bryant-Salamon metric.
\section{$S^1$-quotient of the Spinor bundle of $S^4$}
Let us first outline our general strategy to performing the quotient construction. Recall that the fibres of the spinor bundle of $S^4$ are diffeomorphic to $\R^4\simeq \C^2$. We shall consider the action of the diagonal $U(1)$ in $SU(2)$ on the fibres. This fibrewise quotient can be interpreted as a reverse Gibbons-Hawking (GH) ansatz. We begin by giving a brief overview of the GH construction in subsection \ref{bsquotient} and describe it in detail for the Hopf map by viewing our quotient construction as a fibrewise Hopf fibration in subsection \ref{fibrequotient}. Extending this to the total space we construct the quotient $G_2$-structure on the anti-self dual bundle of $S^4$, see subsection \ref{bsconequotient}. From the results of section \ref{tf} we know that the quotient $G_2$-structure cannot be torsion free but on the other hand, it is well-known that the anti-self dual bundle of $S^4$ also admits a holonomy $G_2$ metric cf. \cite{Bryant1989}. Motivated by the fact that both of these $G_2$-structures are asymptotic to a cone metric on $\C \mathbb{P}^3$ we study the induced $SU(3)$-structures. In subsection \ref{remarksonsu(3)} we give explicit formulae for the $SU(3)$-structures on the link and show that in both cases the induced almost complex structure corresponds to the Nearly-K\"ahler one.

\subsection{The Gibbons-Hawking ansatz}\label{bsquotient}
Since we shall use the Gibbons-Hawking ansatz in the next section, we quickly describe the general construction. In essence it provides a local construction of hyperK\"ahler metrics starting from a $3$-manifold together with a connection form on an $S^1$-bundle and a harmonic function. We begin by recalling the definition of a hyperK\"ahler manifold. 
\begin{Def}
An oriented Riemannian manifold $(M^{4n},g)$ is called hyperK\"ahler (HK) if it admits a triple of closed non-degenerate $2$-forms $\om_1$, $\om_2$ and $\om_3$ satisfying the compatibility conditions
\begin{align*}
\frac{1}{2}\ \om_i \w \om_j=\delta_{ij}\ dvol_g.
\end{align*}
\end{Def}
Let $U$ be an open subset of $\R^{3}$ with the standard Euclidean metric $g_{0}$ and $M^{4}$ a principal $S^{1}$ bundle on $U$ generated by a vector field $X$ normalised to have period $2\pi.$ Suppose we are also given a connection $1$-form $\eta$ on $M^{4}$ such that $\eta(X)=1$ (using the natural identification $\mathfrak{u}(1)\cong \R$). For a positive harmonic function $f$ on $U$ satisfying $*_{g_0}df=d\eta$, the metric 
\[g_{M^{4}}= f \pi^{*}g_{0}+f^{-1}\eta\otimes \eta,\]
and the anti-self-dual (ASD) $2$-forms
\begin{align*}
\omega_{1}= \eta \wedge dx_{1} - f dx_{2}\wedge dx_{3}\\
\omega_{2}= \eta \wedge dx_{2} - f dx_{3}\wedge dx_{1}\\
\omega_{3}= \eta \wedge dx_{3} - f dx_{1}\wedge dx_{2}
\end{align*}
define a HK structure on $M^{4}.$ By construction the triple of symplectic forms are closed:
\begin{align*}
d\omega_{1}&=(*df)\wedge dx_{1} -df \w dx_{2}\w dx_{3}\\&=\partial_{1}f dx_{2}\w dx_{3} \w dx_{1}-\partial_{1}f dx_{1}\w dx_{2} \w dx_{3}\\&=0
\end{align*}
and likewise for $\omega_{2}$ and $\omega_{3}$. The compatible almost complex structures are defined by $g(J_{i}v,w)=\omega_{i}(v,w).$ The closedness of $\omega_{i}$ is equivalent to $\nabla^{g_{M^4}} J_{i}=0$ i.e. $J_i$ are indeed complex structures and thus $Hol(g)\subseteq Sp(1).$

Note that setting $U=\R^{3}$, $\alpha=dx_{0}$ and $f$ constant gives a flat HK cylinder. More interestingly, the projection map $\pi: \R^4 - \{0\} \to \R^{3}-\{0\}$ given in quaternionic coordinates by $\pi(p)=\frac{1}{2}\bar{p}i p$ is the moment map of the Hopf bundle, where the $S^1$ action is generated by left multiplication by $-i$. It turns out that this map can be smoothly extended to the origin whenever $f$ is a suitable harmonic function. Moreover one can recover the flat HK metric on $\R^4$, which we shall describe explicitly in the next subsection.
\subsection{$S^1$-quotient of a fixed fibre of the spinor bundle}\label{fibrequotient}
We begin by reminding the reader of the construction of the Bryant-Salamon $Spin(7)$ manifold.
Given $S^{4}$ with the standard round metric and orientation, we denote by $P\simeq SO(5)$ the total space of the $SO(4)$-structure. Since $H^{2}(S^{4},\R)=0$, in particular the second Stiefel-Whitney class vanishes hence it is a spin manifold so we can lift $P$ to its double cover $\tilde{P}$. The spin group can be described explicitly via the well-known isomorphism
$$Spin(4)\cong Sp(1)_{+}\times Sp(1)_{-} \cong SU(2)_{+}\times SU(2)_{-}$$
where the $\pm$ subscripts distinguish the two copies of $SU(2)$. 
Taking the standard representation of $SU(2)_{-}$ on $\C^{2}_-$, we construct the (negative) spinor bundle $V_{-}:=\tilde{P} \times_{SU(2)_{-}} \C^{2}_-$ as an associated bundle. 

There is also an action of $SU(2)$ on the fibres of $V_{-}$ which can be described as follows. If we ignore the complex structure the fibres of $V_{-}$ are simply $\R^4$ and its complexification is isomorphic to $\C^2_{-}\otimes \C^{2}.$ The desired $SU(2)$ action is the standard action on $\C^2$ and is well-defined on the realification of $V_{-}\otimes \C$. In the description of the Bryant-Salamon construction in \cite{Bryant1989}, this action on the fibre can also be viewed as a global $Sp(1)$ action (acting on the right) on $\Q$ in
\[\tilde{P} \times \Q \xrightarrow{/Sp(1)_-} V_-, \]
thus commuting with the left action of $Sp(1)_{-}$ and hence passes to the quotient. Having now justified the existence of this $SU(2)$ action, we fix a point, $p \in S^{4}$ and describe the action of an $S^{1}\hookrightarrow SU(2)$ on the fibre of $V_{-}.$ This will enable us to describe a fibrewise
HK quotient and then reconstruct the $\R^4$ fibre using the Gibbons-Hawking ansatz with harmonic function $f=1/2R$ where $R$ denotes the radius in $\R^3-\{0\}$ as described in the previous section. Note that topologically the base manifold is just the anti-self dual bundle of $S^4 $ which we denote by $\Lm^2_-S^4$. This is due to the fact that the quotient construction reduces the $Sp(1)_-$ action on the $\R_-^4$ fibre to an action of $SO(3)_-$ on $\R^3=\R^4/S^1$, as we shall see below, and the associated bundle construction for this representation is $\Lm^2_- S^4$ cf. \cite{Salamon1989}.

Let $(x_{1},x_{2},x_{3},x_{4})$ denote the coordinates on the fibre, so that we may write the fibre metric as
\[g = \sum_{i=1}^4 dx_i\otimes dx_i\]
i.e. $g$ denotes the restriction of the Bryant-Salamon metric $g_{\Phi}$ to the vertical space.
Denoting by $r$ the radius function in the fibre, i.e. $r^{2}=\sum_{i=1}^4 x_{i}^{2}$, we have $rdr=\sum_{i=1}^4 x_i dx_{i}$. We make the identifications $\R^4\cong\C^2\cong\mathbb{H}$ by
\[(x_1,x_2,x_3,x_4)=(x_1+ix_2,x_3+ix_4)=x_1+ix_2+jx_3-kx_4\]
Consider now the $U(1)$ action on $\R^{4}\cong \C^{2}$ given by $$e^{i\theta}(z_{1},z_{2})=(e^{-i\theta}z_{1},e^{+i\theta}z_{2})$$ or equivalently by left multiplication by $-i$ on $\Q$. Note that this $S^1$ is just the diagonal torus of $SU(2)$.
The Killing vector field $X$ generating this action is given by 
$$ X = x_2\pd{x_1} -x_1\pd{x_2} -x_4\pd{x_3} +x_3\pd{x_4}.$$
and thus $\|X\|_{g}=r$. We also endow the fibre with a HK structure given by the triple
\[\begin{array}{rcl}
\gamma_1 &=& dx_1\wedge dx_2 - dx_3\wedge (-dx_4)\y
\gamma_2 &=& dx_1\wedge dx_3 - (-dx_4)\wedge dx_2\y
\gamma_3 &=& dx_1\wedge (-dx_4) - dx_2\wedge dx_3
\end{array}\]

They can be extended to a local
orthonormal basis of the bundle $\Lm^2_-S^4$ but the resulting
forms will \emph{not} be closed. The spin bundle \emph{does} have a
global HK structure, but arising from $SU(2)_+$ and since we have already 
fixed one of its complex structures, this HK structure is not relevant. 
In view of our quotient construction, we define 
$$
\eta :=r^{-2} g_\Phi(X, \cdot)=r^{-2}( x_{2}dx_1 -x_{1}dx_2-x_{4}dx_3+x_{3}dx_4) 
$$
i.e $\eta$ is a connection $1$-form on $V_-$.%\eta=r^{-2}\theta 
\ The map
\begin{align*}
\mu \colon\ \R^4 &\to \R^3 \\
(x_1,x_2,x_3,x_4) &\mapsto (\mu_1,\mu_2,\mu_3)
\end{align*}
where
\[\begin{array}{rcl}
\mu_1 &=& \frac{1}{2}(x_1^2 +x_2^2 -x_3^2 -x_4^2) \y
\mu_2 &=& x_1x_4 +x_2x_3 \y
\mu_3 &=& x_1x_3 -x_2x_4.
\end{array}\]
is the HK moment map for the $U(1)$
action. By identifying $\R^3$ with $\text{Im}(\Q)$, $\mu$ can also be expressed using quaternions as:
\[ \mu(q)= \frac{1}{2}\overline q\,i\,q,\qquad q=x_1+x_2i+x_3j-x_4k.\]
making the $S^1$-invariance clear.
Thus $\mu$ induces
a diffeomorphism 
\[\R^4/U(1)\simeq \R^3.\]
Note that strictly speaking this action is not free but nonetheless the construction can be carried out on $\R^4-\{0\}$ and can be extended smoothly to the origin.
A direct computation gives
$$\begin{aligned}
\gamma_{3} &= dx_{32}+dx_{41}\\
&=r^{-2}\Big((x_2 dx_1 -x_1dx_2 -x_4dx_3+x_3dx_4)\w(x_1dx_3+x_3dx_1-x_2dx_4-x_4dx_2)\\
&\ \ \ -(x_1 dx_1 +x_2 dx_2+x_3 dx_3+x_4dx_4)\w(x_1dx_4+x_4dx_1+x_2dx_3+x_3dx_2)\Big)\\ 
&=\eta\wedge d\mu_3-f\,d\mu_1\wedge d\mu_2.
\end{aligned}$$
where $f=\frac{1}{2 R}$ and $R=\sqrt{\mu_1^2+\mu_3^2+\mu_3^2}$ is the radius on $\R^3$. Similarly we obtain
\[\begin{array}{rcl} 
\gamma_1 &=& \eta\wedge d\mu_1-f\,d\mu_2\wedge d\mu_3\y
\gamma_2 &=& \eta\wedge d\mu_2-f\,d\mu_3\wedge d\mu_1
\end{array}\]
This confirms that $\eta$ is the connection form that
features in the Gibbons-Hawking ansatz with
\[  g_{\R^4} = f^{-1}\eta\otimes\eta + f\,\pi^*g_U,\]
where $g_U=d\mu_1^2+d\mu_2^2+d\mu_3^2$ is the Euclidean metric on $\R^3$ with volume form $vol_{\R^3}=d\mu_{123}$. Using $R^2=\sum_{i=1}^3\mu_i^2=\frac{1}{4}r^4$ we can directly verify that
\[
*_{\R^3}df=d\eta
.\]

Having described the GH ansatz for the Euclidean space we proceed to our main example.
\subsection{$S^1$-quotient of the Bryant-Salamon cone metric}\label{bsconequotient}
We shall now take the quotient of Bryant-Salamon metric by applying the above construction to each $\R^4$ fibre.
The conical Bryant-Salamon $Spin(7)$ $4$-form is given (pointwise) in our notation by
\[ \Phi =16 r^{-8/5} dx_{1234} +20 r^{2/5}\sum \gamma_i\wedge\epsilon_i
+25r^{12/5}dvol_{S^4},\]
where $\{\epsilon_i\}$ is a local basis of ASD forms on $S^4$  and $dvol_{S^4}$ is the (pullback of) the volume form.
The $\Spin(7)$ metric is then given by
\[ g_{\Phi}=4r^{-4/5}\sum_{i=1}^4{dx_i}^2+5r^{6/5}g_{s^{4}},\]
and so the $1$-forms $d\mu_i$ (or rather, $\pi^*d\mu_i=d(\mu_i\circ\pi)$) have
norm
\[ \|d\mu_i\|_{\Phi}^2=\frac{1}{4}r^{14/5}.\]
On the other hand, from (\ref{basicequation}) we compute
\[  s^{-2}=g_\Phi(X,X)=4r^{6/5},\]
so $s=r^{-3/5}$. 
We know that the $G_2$ metric $g_{\vp}$ satisfies
\[ g_{\vp} = s^{-2/3}(g_{\Phi}-s^{-2}\eta^2)=r^{2/5}(g_{\Phi}-4r^{6/5}\eta^2).\]
Considering the volume form 
of the fibre of the quotient we have
\[\begin{array}{lcl}-r^{-2} d\mu_{123}
&=&-x_3dx_{123}-x_4dx_{124}+x_1dx_{134}+x_2dx_{234}\\
&=& X\ip dx_{1234}.
\end{array}\]
Defining $d\nu_i=\iota_{X}\gamma_{i}$ we have that $d\nu_{123}=-d\mu_{123}.$
Putting all together we have
\[\begin{array}{rcl} X\ip\Phi 
&=& X\ip\,16 r^{-8/5}dx_{1234} +20 r^{2/5}\sum(X\ip\gamma_i)\wedge\epsilon_i\y
&=& 2^{11/5}(R^{-9/5}d\nu_{123} + 5R^{1/5}\sum_{i=1}^3 d\nu_i\wedge\epsilon_i).
\end{array}\]
We can now extend this pointwise construction to the whole of $V^-.$
From our construction, the induced $G_{2}$-structure on the quotient is given (after rescaling) by
\[ \vp^{}_{GH}\ =\ \frac{1}{6}R^{-9/5}\,\beta + 5R^{1/5}\,d\tau\]
We are here using the globally well-defined forms defined in \cite[pg 94]{Salamon1989} (see also the appendix below) where $\tau$ is tautological $2$-form on the ASD bundle and $\frac{1}{6}\beta$ is the volume form of the fibre which was pointwise denoted by $dx_{1234}$. By contrast the holonomy $G_{2}$ form is given by 
\[ \vp^{}_{BS}\ =\ \frac{1}{6}R^{-3/2}\,\beta +2 R^{1/2}\,d\tau.\]
Since the Bryant-Salamon metric on $\R^+\times\C \mathbb{P}^3$ is just the cone metric on $\C\mathbb{P}^3$ endowed with its Nearly-K\"ahler (NK) structure we may also write it as
\[g_{BS}=dt^2+t^2\ (\frac{1}{2}g_{S^4}+\frac{1}{4}\hat{g}_{S^2} )\]
where $t$ denotes the coordinate of $\R^+$ and $g_{NK}:=\frac{1}{2}g_{S^4}+\frac{1}{4}\hat{g}_{S^2}$ is the NK metric (up to homothety). Here we are interpreting $g_{NK}$ as a metric on the twistor space of $S^4$ where $g_{S^4}$ denotes the pullback of the round metric and $\hat{g}_{S^2}$ the metric on the $S^2$ fibres (see the appendix for more details). Comparing $\vp_{BS}$ with $\vp_{GH}$ and using our expression for $g_{BS}$ we can perform a pointwise computation as above and show that
\[g_{GH}=dt^2 +\frac{8}{5}t^2\ (\frac{1}{2}g_{S^4}+\frac{1}{10}\hat{g}_{S^2}).\]
The quotient metric is thus the cone metric on the twistor space of $S^4$ but with ``smaller'' $S^2$ fibres. In order to gain better understanding of the geometric structure on the $\C \mathbb{P}^3$ we look at the induced $SU(3)$-structure.
\subsection{Remarks on the induced $SU(3)$-structure on $\C \mathbb{P}^3$}\label{remarksonsu(3)}
We remind the reader that an $SU(3)$-structure on a $6$-manifold consists of a non-degenerate $2$-form $\om$ and a pair of $3$-forms $\Om^\pm$ satisfying the compatibility conditions
\[ \om \w \Om^\pm=0 \text{\ \ and \ \ }\frac{2}{3} \om^3={\Om^+ \w \Om^-} .\] 
The relevance of this here comes the fact that oriented hypersurfaces in $G_2$-structures naturally inherit such a structure. If $\mathbf{n}$ denotes the unit normal to a hypersurface $Q^6$ then the forms are given by:
\begin{align*}
\om&=\mathbf{n}\ip\vp\\
\Om^+&=\vp\big|_{Q^6}\\
\Om^-&=-\mathbf{n}\ip*_\vp\vp. 
\end{align*}
It is known that the NK structure on $\C P^3$ satisfies
\[ d\om_{NK} =3\ \Om^+_{NK} \text{\ \ and  \ \ } d\Om_{NK}^-=-2\ \om_{NK}^2. \]
In contrast the $SU(3)$-structure $(\om_{GH},\Om^+_{GH},\Om^-_{GH})$ on the link (for $t=1$) of the quotient $G_2$-structure satisfies  
\begin{gather*}
d\om_{GH}=3\ \Om^+_{GH}\\
d\Om^-_{GH}=-2\ \om_{GH}^2-\frac{1}{5}\ (\frac{1}{5}\ \sigma -\tau)\w \om_{GH}
\end{gather*}
The proof is a straightforward computation using the formulae in the appendix. Two things worth noting are that $\Om_{GH}^+=\frac{32}{25}\ \Om^+_{NK}=\frac{8}{25}\ d\tau$ so in particular both define the same almost complex structure and the extra-torsion component $\frac{1}{5}\sigma-\tau$ lives in $[\Lm^{1,1}_0].$ Using the formulae from \cite[Thm $3.4$-$3.6$]{Bedulli2007} we can confirm directly that this metric is not Einstein which is consistent with the canonical variation approach \cite[pg. 258]{Besse2008} which asserts that there are only two Einstein metrics in this family, the Fubini-Study metric and the NK one. Moreover it was also shown in \cite{Foscolo2015a} that in fact there are no other cohomogeneity one NK structure on $\C P^3$. Nonetheless the scalar curvature of $g_{GH}$ is still constant and positive:
\begin{align*}
Scal(g_{GH})&=30-\frac{1}{2}\cdot\|\frac{1}{5}(\frac{1}{5} \sigma-\tau) \|^2_{g_{GH}}\\
&=30-\frac{1}{2}\cdot\frac{3}{8}\\
&=\frac{477}{16}>0.
\end{align*} 
It is also worth pointing out that this $SU(3)$-structure is half-flat cf. \cite{ChiossiSalamonIntrinsicTorsion, Fino2015} and as such can be evolved by the Hitchin flow to construct a torsion free $G_2$-structure. The resulting metric belongs to the general class of metrics of the form
\[g=dt^2+a(t)^2\hat{g}_{S^2}+b(t)^2g_{S^4} ,\]
which were considered in \cite[Sect. $5$B]{GibbonsCoho1}. It was also shown, after suitable normalisations, that the Bryant-Salamon metrics are the only solutions to this system. 

\begin{Rem}
Observe that, as in the GH ansatz for the Hopf map, this construction extends to the smooth Bryant-Salamon $Spin(7)$ metric with the same circle action but which now has as fixed point locus an $S^4$ corresponding to the zero section of the spinor bundle. Extending the above construction to the smooth metric simply amounts to replacing $R$ by $R+1$ in the expressions $\vp_{BS}$ and $\vp_{GH}$. Thus, we obtain a closed $G_2$-structure on all of $\Lm^2_-S^4$. 
\end{Rem}

\section{$S^1$-quotient of flat $Spin(7)$ metric}\label{flatquotient}

We now consider a simpler situation: that of the $S^1$-reduction of the flat $Spin(7)$ structure $\Phi_0= \frac{1}{8}(-d \al_1^2 + d \al_2^2 + d \al_3^2)$ on $\R^8$ where
\begin{align*}
\al_1&= -x_1dx_0+x_0dx_1+x_3dx_2-x_2dx_3-x_5dx_4+x_5dx_4+x_7dx_6-x_6dx_7,\\
\al_2&= -x_2dx_0+x_0dx_2+x_1dx_3-x_3dx_1-x_6dx_4+x_4dx_6+x_5dx_7-x_7dx_5,\\
\al_3&= -x_3dx_0+x_0dx_3+x_2dx_1-x_1dx_2-x_7dx_4+x_4dx_7+x_6dx_5-x_5dx_6.
\end{align*}
This explicit construction was motivated by the work of Acharya, Bryant and Salamon \cite{Acharya2019circle} where they investigate the $S^1$-reduction of the conical $G_2$ metric on $\R^+ \times \C P^3.$ We can identify $\R^8$ with coordinates $(x_0,x_1,...,x_7)$ with $\Q^2$ by $(x_0+ix_1+jx_2+kx_3,x_4+ix_5+jx_6+kx_7).$ There are natural actions given by $Sp(2)$ acting by left multiplication and $Sp(1)$ acting by multiplication on the right. The $1$-forms $\al_i$ are simply the dual of the $S^1$ actions given by right multiplication by the imaginary quaternions. We consider the $S^1$ action generated by the vector field
\[X=-x_1 \partial_0+ x_0 \partial_1 -x_3 \partial_2+ x_2 \partial_3 -x_5 \partial_4+ x_4 \partial_5 -x_7 \partial_6+ x_6 \partial_7 \]
given by a diagonal $U(1) \subset Sp(2).$ A simple computation shows that
\[ d ( X \ip d\al_i \w d\al_i)=0 \text{ \ \ for\ \ } i=1,2,3 \]
from which it follows that $\mathcal{L}_X \Phi_0=0$. Thus we get a closed $G_2$-structure on the quotient space $\R^+ \times \C P^3$ given by $\vp= \iota_X \Phi$ from \ref{basicequation}.  Noting that $\Phi_0$ is also invariant by the right $S^1$ action generated by the vector field
\[Y=-x_1 \partial_0+ x_0 \partial_1 +x_3 \partial_2- x_2 \partial_3 -x_5 \partial_4+ x_4 \partial_5 +x_7 \partial_6- x_6 \partial_7 \]
i.e $\mathcal{L}_Y\Phi_0=0$ and that both $S^1$ actions commute, we can take the (topological) $\mathbb{T}^2$ reduction to the $6$-manifold $\R^3 \oplus \R^3 - \{0\}$. More concretely, we can split $\R^8=\R^4 \oplus \R^4$ with coordinates $x_0,x_1,x_4,x_5$ on the first factor and $x_2,x_3,x_6,x_7$ on the second and we consider the equivalent $\mathbb{T}^2$ action given by the vector fields $\frac{1}{2}(X+Y)$ and $\frac{1}{2}(X-Y)$, each acting non-trivially on only one $\R^4$ factor. Using the HK moment maps as in the previous section we get coordinates $u_i$ and $v_i$ on $\R^3 \oplus \R^3-\{0\}$ given by
\begin{align*}
u_1&=x_0^2+x_1^2-x_4^2-x_5^2    &  v_1&=x_2^2+x_3^2-x_6^2-x_7^2\\
u_2&=2\ (x_0 x_4+x_1 x_5)           &
v_2&=2\ (x_2 x_6+x_3 x_7)\\
u_3&=2\ (x_0 x_5 - x_1 x_4)           &
v_3&=2\ (x_2 x_7 - x_3 x_6).
\end{align*}
These coordinates can now be pulled back to $\R^+ \times \C P^3$ and will allow us to give an explicit expression for $\vp$. From this point of view we have the $S^1$-bundle: $$\R^+\times \C P^3 \xrightarrow{/S^1} \R^3\oplus\R^3-\{0\}.$$ 
Following the Apostolov-Salamon construction \cite{Apostolov2003} we can write 
\begin{align}
\label{AS1}\vp&=\xi \w\om + H^{3/2}\ \Om^+\\
*_\vp\vp&=\frac{1}{2}\ H^2 \om^2 - \xi\w H^{1/2} \ \Om^-
\end{align}
where $H:=\|Y\|^{-1}_\vp$, $\xi$ is the connection $1$-form defined by
\[ \xi(\cdot):=H^2\ g_\vp(Y,\cdot) \]
and $(\om,\Om^+,\Om^-)$ is the $SU(3)$-structure induced on $\R^3\oplus\R^3-\{0\}$. We now give coordinate expressions for the aforementioned differential forms.%strictly speaking we mean the pullback of g_\vp to \R^8 in the definition of \xi. the fact that this is indeed well-defined is since X and Y commute i.e. [X,Y]=0 (this was checked using maple)
\begin{Prop}\label{quotient}
In the above notation the closed $G_2$-structure on $\R^+ \times \C P^3$ given by $\vp = \iota_X \Phi_0$ can be expressed as
\[ \vp= \xi \w \frac{1}{2}\sum_{i=1}^{3}  dv_i \w du_i  +\frac{1}{8}\bigg( \frac{1}{u}\big(du_{123}-\{d \boldsymbol{v}, d \boldsymbol{u},d \boldsymbol{u}\}\big) + \frac{1}{v}\big(dv_{123}-\{d \boldsymbol{v}, d \boldsymbol{v},d \boldsymbol{u}\}\big)\bigg) ,\]	where $\{d \boldsymbol{v}, d \boldsymbol{v},d \boldsymbol{u}\}$ denotes $$dv_1 \w dv_2 \w du_3 + dv_2 \w dv_3 \w du_1+ dv_3 \w dv_1 \w du_2,$$ similarly for $\{d \boldsymbol{v}, d \boldsymbol{u},d \boldsymbol{u}\}$. 
Moreover we have
\[H^{\frac{1}{2}}\Om^- = \frac{1}{4 R^{\frac{2}{3}}}\bigg(\{ d\boldsymbol{v},d\boldsymbol{v},d\boldsymbol{u} \} - \{d\boldsymbol{u},d\boldsymbol{u},d\boldsymbol{v} \} + \frac{u}{v} dv_{123} - \frac{v}{u} du_{123} \bigg), \]
\[H=\frac{R^{2/3}}{2\ u^{1/2} \ v^{1/2} } ,\]
where $R^2:=x_0^2+\cdots+x_7^2$, $u^2:=u_1^2+u_2^2+u_3^2=(x_0^2+x_1^2+x_4^2+x_5^2)^2$ and likewise for $v.$ %The metric induced by $(\om,\Om^+)$ on $\R^3\oplus\R^3-\{0\}$ is given by
%\[g_\om=\frac{1}{2} \bigg( \frac{v^{1/2}}{u^{1/2}}\ (du_1^2+du_2^2+du_3^2) + \frac{u^{1/2}}{v^{1/2}}\ (dv_1^2+dv_2^2+dv_3^2) \bigg).\]
%and 
The curvature of the $S^1$-bundle over $\R^3 \oplus \R^3 -\{0\}$ is given by
\[d \xi= - \frac{\{\boldsymbol{v},d\boldsymbol{v},d\boldsymbol{v}\}}{4 (v_1^2+v_2^2+v_3^2)^{3/2}} + \frac{\{\boldsymbol{u},d\boldsymbol{u},d\boldsymbol{u}\}}{4 (u_1^2+u_2^2+u_3^2)^{3/2}},\]
where $\{\boldsymbol{v},d\boldsymbol{v},d\boldsymbol{v}\}$ denotes
$$ v_1 dv_2 \w dv_3+v_2 dv_3 \w dv_1+v_3 dv_1 \w dv_2,$$
and likewise for $\{\boldsymbol{u},d\boldsymbol{u},d\boldsymbol{u}\}$. %Using $g_\om$ and $\om$ we find that the induced almost complex structure $J$ is given by:
%\[J(u^{1/2} \partial_{u_i})=v^{1/2}\partial_{v_i} , \text{\ \ \  for\ \  }i=1,2,3. \]
%By pulling back $\xi$ to $\R^8$ we can write it as
%\[ \xi=\frac{x_0 dx_1-x_1dx_0+x_4 dx_5-x_5 dx_4}{2 u} + \frac{-x_2 dx_3+x_3dx_2-x_6 dx_7+x_7 dx_6}{2 v}.\]
\end{Prop}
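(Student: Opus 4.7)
The plan is to exhibit $\R^{+}\times \C\mathbb{P}^3 \to \R^3\oplus\R^3-\{0\}$ as a fibrewise Gibbons--Hawking quotient in the two $\Q$ factors of $\R^8 = \Q\oplus\Q$, and then extract the Apostolov--Salamon data directly. First I would decompose $\R^8 = \Q_+ \oplus \Q_-$ with coordinates $(x_0,x_1,x_4,x_5)$ and $(x_2,x_3,x_6,x_7)$, and observe that the commuting vector fields $\tfrac{1}{2}(X\pm Y)$ act as right multiplication by $-i$ on a single $\Q$ factor. The fibrewise analysis of Section~\ref{fibrequotient} then applies verbatim on each factor, producing the HK moment maps whose components are exactly the stated $u_i$ and $v_i$ (the normalisation being $u_i = 2\mu_i^+$, whence $u = r_+^2$ and $f_+ = 1/u$), together with connection $1$--forms $\eta_\pm$ dual to $\tfrac{1}{2}(X\pm Y)$ and satisfying $d\eta_\pm = *_{\R^3}d f_\pm$.

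Next I would express $\Phi_0$ entirely in GH variables. The fibrewise volume forms become $dx_{0145} = u^{-1}\eta_+ \w du_{123}$ and $dx_{2367} = v^{-1}\eta_- \w dv_{123}$, while the two HK triples take the form
\[
\om_i^{+} = \eta_+ \w du_i - \tfrac{1}{2u}\,\epsilon_{ijk}\,du_j\w du_k,
\]
with an analogue in $v$. Substituting these into $\Phi_0 = \tfrac{1}{8}(-d\al_1^2 + d\al_2^2 + d\al_3^2)$ and regrouping rewrites $\Phi_0$ purely in terms of $\eta_\pm$, $du_i$ and $dv_i$. Contracting with $X = \tfrac{1}{2}(X+Y)+\tfrac{1}{2}(X-Y)$ and using $\iota_{(X\pm Y)/2}\eta_\pm = 1$ (while these operators annihilate every basic form on the opposite factor) yields $\vp = \iota_X\Phi_0$ in the claimed shape. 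The $u^{-1}$ and $v^{-1}$ prefactors produce the coefficients $\tfrac{1}{8}$, and the $2$--form $\om = \tfrac{1}{2}\sum dv_i\w du_i$ appears naturally as the form wedged with whichever combination of $\eta_\pm$ descends as the $Y$--connection $\xi$.

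The remaining data is obtained by short, direct manipulations. For $H = \|Y\|^{-1}_\vp$ one uses $\|X\|_{\Phi_0}^2 = \|Y\|_{\Phi_0}^2 = R^2$ and $g_{\Phi_0}(X,Y) = u-v$; then $s = R^{-1}$ together with $g_\vp = s^{-2/3}(g_{\Phi_0} - s^{-2}\eta^2)$ give
\[
\|Y\|_\vp^2 = s^{-2/3}\bigl(R^2 - (u-v)^2/R^2\bigr) = 4uv/R^{4/3},
\]
hence $H = R^{2/3}/(2u^{1/2}v^{1/2})$. For $d\xi$: since $Y$ acts oppositely on the two $\Q$ factors, $\xi$ is a constant multiple of $\eta_+ - \eta_-$, so $d\xi = d\eta_+ - d\eta_-$; applying $*_{\R^3}$ to $d(1/u) = -u^{-3}\sum u_i du_i$ (using $*\,du_i = \tfrac{1}{2}\epsilon_{ijk}du_j\w du_k$) reproduces the stated formula with its $\tfrac{1}{4}$ coefficients and the opposing signs on the two summands. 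Finally $H^{1/2}\Om^{-}$ is read off by computing $*_\vp \vp$ via Lemma~\ref{hodgestarlemma} and matching against $\tfrac{1}{2}H^2\om^2 - \xi \w H^{1/2}\Om^{-}$, or equivalently by a direct contraction of $Y$ with $*_{\Phi_0}\Phi_0$.

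The main obstacle is one of bookkeeping rather than conceptual difficulty: several interacting conventions (the rescaling $u_i = 2\mu_i$, the relation $u = r_+^2$, and the sign choices for the HK triples implicit in the defining formula for $\Phi_0$) must be threaded consistently through the computation, and doing so is precisely what produces the precise coefficients $\tfrac{1}{8}$, $\tfrac{1}{4}$ and the overall signs stated in the proposition.
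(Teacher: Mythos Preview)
Your approach is sound and in fact more conceptual than what the paper offers. The paper's own ``proof'' consists of a single sentence: it is a long computation carried out with \textsc{Maple}, and one may verify the formulae directly by expressing the $u_i,v_i$ in terms of the $x_i$. No structural argument is given. Your strategy --- splitting $\R^8=\Q_+\oplus\Q_-$, recognising $\tfrac12(X\pm Y)$ as the Hopf generators on each factor, rewriting $\Phi_0$ in Gibbons--Hawking variables, and then reading off the Apostolov--Salamon data --- is exactly the right way to \emph{explain} where the stated formulae come from rather than merely check them. Your derivation of $H$ via $\|Y\|^2_\vp = s^{-2/3}\bigl(R^2-(u-v)^2/R^2\bigr)=4uv/R^{4/3}$ is clean and correct, and the identification $\xi=\tfrac12(\eta_+-\eta_-)$ (which follows from $g_\vp(Y,\cdot)=2uvR^{-4/3}(\eta_+-\eta_-)$) gives $d\xi$ immediately from the two monopole curvatures.

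Two small cautions, both within the ``bookkeeping'' you already flag. First, the circle action $\tfrac12(X+Y)$ on $\Q_+$ is not literally the action of Section~\ref{fibrequotient} after relabelling: the signs on the first pair of coordinates differ, so the moment maps $u_i$ are not simply $2\mu_i^+$ but a signed permutation thereof (compare $u_2=2(x_0x_4+x_1x_5)$ with $\mu_3=x_1x_3-x_2x_4$). This does not affect the structure of your argument, but ``applies verbatim'' overstates it. Second, and relatedly, the sign of $d\eta_\pm$ depends on this orientation choice, so the overall sign in $d\xi$ needs care; a naive transplant of the Section~\ref{bsquotient} conventions gives the opposite sign to the one stated in the proposition. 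These are precisely the consistency checks you identify as the main obstacle, and they are the only place your outline needs tightening.
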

The proof is a long computation which was carried out with the help of \textsc{Maple}. One can directly verify the above formulae hold using the definitions of $u_i$, $v_i$ and expressing them in terms of $x_i.$ The reader might find it interesting to compare our expressions to those in \cite{Acharya2019circle} for the torsion free $G_2$ quotient.
%Note that since $s=R^{-1}$ is not constant and $\vp$ is not torsion free, $\eta$ is neither an instanton nor an anti-instanton.

In \cite{Hitchin2000} Hitchin shows that an $SU(3)$-structure is completely determined by the pair $(\om,\Om^+)$. Note that here $\Om^+$ can easily be read off from the expressions for $\vp$ and $H$ in Proposition \ref{quotient} and formula (\ref{AS1}). Thus, we can explicitly compute the induced complex structure and metric on $\R^3\oplus\R^3-\{0\}$.
\begin{Prop}
The metric induced by $(\om,\Om^+)$ on $\R^3\oplus\R^3-\{0\}$ is given by
\[g_\om=\frac{1}{2} \bigg( \frac{v^{1/2}}{u^{1/2}}\ (du_1^2+du_2^2+du_3^2) + \frac{u^{1/2}}{v^{1/2}}\ (dv_1^2+dv_2^2+dv_3^2) \bigg)\]
and the almost complex structure $J$ by
\[J(u^{1/2} \partial_{u_i})=v^{1/2}\partial_{v_i} , \text{\ \ \  for\ \  }i=1,2,3. \]
\end{Prop}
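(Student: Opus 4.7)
The plan is to use the fact that an $SU(3)$-structure is fully encoded in the pair $(\omega,\Omega^+)$: once the $(1,0)$-subspace of $T^*_{\mathbb{C}}M$ is identified, the almost complex structure $J$ is determined and the metric follows from the K\"ahler identity relating $\omega$, $J$ and $g_\omega$. Conveniently, Proposition \ref{quotient} furnishes \emph{both} $\Omega^+$ and, via formula (\ref{AS1}) applied to the expression for $*_\vp\vp$, $\Omega^-$ explicitly, so the most efficient strategy is to exhibit a $(1,0)$-coframe $\{\theta^1,\theta^2,\theta^3\}$ with $\theta^1\wedge\theta^2\wedge\theta^3$ proportional to $\Omega^{+}+i\,\Omega^{-}$, and then to read $J$ off from the condition $J^*\theta^i=i\theta^i$.

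Motivated by the manifest symmetry exchanging the $u$- and $v$-sectors in the formulas of Proposition \ref{quotient}, my candidate coframe is
\[
\theta^i := v^{1/2}\,du_i + i\,u^{1/2}\,dv_i, \qquad i=1,2,3.
\]
Expanding $\theta^1\wedge\theta^2\wedge\theta^3$ produces a linear combination of exactly the four $3$-forms that appear in Proposition \ref{quotient}, namely $du_{123}$, $dv_{123}$, $\{d\boldsymbol{v},d\boldsymbol{u},d\boldsymbol{u}\}$ and $\{d\boldsymbol{v},d\boldsymbol{v},d\boldsymbol{u}\}$. Solving $\Omega^{+}+i\,\Omega^{-} = \lambda(u,v)\,\theta^1\wedge\theta^2\wedge\theta^3$ for a complex scalar $\lambda$ then reduces to a system of four real linear equations in $\operatorname{Re}(\lambda)$ and $\operatorname{Im}(\lambda)$; the consistency of this system is precisely the confirmation that the $\theta^i$ span the $(1,0)$-subspace determined by $(\omega,\Omega^+)$.

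From $J^*\theta^i=i\theta^i$, separating real and imaginary parts in the basis $\{du_i,dv_i\}$ yields $J^*du_i = -(u/v)^{1/2}dv_i$ and $J^*dv_i = (v/u)^{1/2}du_i$, which dualises immediately to the claimed identity $J(u^{1/2}\partial_{u_i})=v^{1/2}\partial_{v_i}$. Plugging this $J$ into the standard K\"ahler identity $g_\omega(\cdot,\cdot)=\omega(\cdot,J\cdot)$ with $\omega=\tfrac{1}{2}\sum_k dv_k\wedge du_k$ then shows that $\{u^{1/2}\partial_{u_i},v^{1/2}\partial_{v_i}\}$ is an orthogonal frame of common squared length $\tfrac{1}{2}\sqrt{uv}$, and rescaling to the coordinate frame $\{\partial_{u_i},\partial_{v_i}\}$ reproduces the stated formula for $g_\omega$. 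The main nuisance in executing this plan is consistent sign bookkeeping (the orientation of the $(3,0)$-line, the choice $\omega=g(J\cdot,\cdot)$ versus $\omega=-g(J\cdot,\cdot)$, and the sign with which $\Omega^-$ enters the $SU(3)$-decomposition of $*_\vp\vp$); once these conventions are fixed the remaining verifications are routine expansions of triple wedge products and can be done by hand or, as elsewhere in the paper, with \textsc{Maple}.
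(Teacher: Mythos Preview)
Your proposal is correct and follows essentially the same route as the paper: the paper simply remarks that, by Hitchin, $(\omega,\Omega^+)$ determines the $SU(3)$-structure and that the metric and almost complex structure ``can be explicitly computed'' from the formulae in Proposition~\ref{quotient}, without giving further details. Your write-up is a reasonable fleshing-out of that computation; the one small difference is that you also make use of the explicit expression for $H^{1/2}\Omega^-$ from Proposition~\ref{quotient} to pin down the $(1,0)$-coframe directly via $\Omega^+\pm i\Omega^-$, which is a harmless shortcut compared with reconstructing $J$ from $(\omega,\Omega^+)$ alone via Hitchin's algorithm. As you already note, the only care needed is in the sign conventions---in particular, with $\omega=\tfrac12\sum dv_k\wedge du_k$ and $J(u^{1/2}\partial_{u_i})=v^{1/2}\partial_{v_i}$ one finds $\omega(\partial_{u_i},J\partial_{u_i})<0$, so the correct K\"ahler identity here is $g_\omega(X,Y)=-\omega(X,JY)=\omega(JX,Y)$ (equivalently, your $\theta^i$ span the $(0,1)$-subspace for the stated $J$); once this is fixed the rest goes through exactly as you describe.
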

\noindent Note that since $\vp$ is closed from (\ref{g2torsion2}) we have that
\[ d*_\vp \vp = \tau_2 \w \vp. \]
We shall now derive an explicit expression for the torsion of the $G_2$-structure $\vp.$ 
Under the inclusion $SU(3)\hookrightarrow G_2$ we can write the torsion form as
$$\tau_2 = \xi \w \tau_v + \tau_h $$
where $\tau_v$ and $\tau_h$ are basic $1$-form and $2$-form respectively i.e. they are (pullback of) forms on $\R^3\oplus\R^3-\{0\}$. It is not hard to show that $\tau_h \in [\Lm^{2,0}] \oplus [[\Lm^{1,1}_0]]$ and that the $[\Lm^{2,0}]$-component of $\tau_h$ is $SU(3)$-equivalent to $\tau_v.$ We compute $\tau_h$ and $\tau_v$ as
\begin{align*}
\tau_h \cdot (3 u v\cdot R^{8/3})=&-u\cdot(\frac{1}{2}(\{\boldsymbol{u}, d \boldsymbol{v},d \boldsymbol{v}\}+\{\boldsymbol{v}, d \boldsymbol{v},d \boldsymbol{v}\})+\frac{3u}{2v} \{\boldsymbol{v}, d \boldsymbol{v},d \boldsymbol{v}\})\\
&-v\cdot(\frac{1}{2}(\{\boldsymbol{v}, d \boldsymbol{u},d \boldsymbol{u}\}+\{\boldsymbol{u}, d \boldsymbol{u},d \boldsymbol{u}\})+\frac{3v}{2u} \{\boldsymbol{u}, d \boldsymbol{u},d \boldsymbol{u}\})\\
&-\frac{1}{2}( u \{\boldsymbol{v}, d \boldsymbol{v},d \boldsymbol{u}\}+ v\{\boldsymbol{u}, d \boldsymbol{v},d \boldsymbol{u}\})\\
&-\frac{1}{2}( v \{\boldsymbol{u}, d \boldsymbol{u},d \boldsymbol{v}\}+ u\{\boldsymbol{v}, d \boldsymbol{u},d \boldsymbol{v}\})
\end{align*}
and 
\begin{align*}
\tau_v\cdot (\frac{3}{2}\cdot R^{8/3})=\ & \sum_{i=1}^3\bigg(\frac{1}{  v}(v u_i - 3 u v_i)\ dv_i -\frac{1}{u}(u v_i - 3 v u_i)\ du_i   \bigg)\\
=\ & \bigg( \boldsymbol{u}\cdot d\boldsymbol{v}-\boldsymbol{v}\cdot d\boldsymbol{u}-3(u dv -vdu) \bigg)
\end{align*} 
where $\boldsymbol{u}\cdot d\boldsymbol{v}$ denotes $\sum_{i=1}^{3}u_i dv_i$ and likewise for $\boldsymbol{v}\cdot d\boldsymbol{u}$. From these expressions one can show that the $[\Lm^{1,1}_0]$-component of $\tau_h$ is non zero i.e. $J$ is non-integrable.
%Note that since $s=R^{-1}$ is not constant and $\vp$ is not torsion free, $\eta$ is neither an instanton nor an anti-instanton.

\begin{Rem}\
\begin{itemize}

\item If we restrict the $Spin(7)$ $4$-form $\Phi_0$ on $\R^8$ to $S^7$ we get a $G_2$ $4$-form $*_{S^7}\vp_{S^7}$ and the flat metric restricts to give the standard round metric. Since the cone metric is just the flat metric again, this means that this cocalibrated $G_2$ structure is inducing the round metric. This statement is in agreement with the fact that with the round metric $S^7$ is a $3$-Sasakian manifold. Note that in contrast the squashed Einstein metric on $S^7$ has exactly one Killing spinor so the cone metric has holonomy \textit{equal} to $Spin(7)$ \cite{FRIEDRICH1997259, Friedrich1990}. We can now take the $S^1$-quotient with respect to any free $S^1$ action preserving the round nearly parallel $G_2$-structure. Since this quotient is also a Riemannian submersion (as the size of the circle orbits are constant) the quotient metric is just the Fubini-Study metric. However by contracting the $4$-form with the vector field generated by the $S^1$ we get the (negative) imaginary part of a $(3,0)$ form on the $\C P^3$. The latter induces an almost complex structure compatible with the Fubini-Study metric but which definitely cannot be the integrable one, otherwise this contradicts the fact that the canonical bundle of $\C P^3$ with the Fubini-Study complex structure is non-trivial. The above closed $G_2$-structure is then just the Riemannian cone on this $\C P^3$. More explicitly, we can write the flat metric on $\R^8$ as
\[g_{\R^8}=dR^2+R^2(\eta^2+g_{FS})=R^2\eta^2+R^{-2/3}g_\vp \]
where $\eta$ is just the connection form of the $S^1$ action for the Fubini-Study quotient as above and $s=\|X\|^{-1}_\Phi=R^{-1}.$ Thus the metrics of proposition \ref{quotient} can also be expressed as
\begin{gather*}
g_\vp=R^{2/3}\cdot dR^2+R^{8/3}g_{FS}=dr^2+\frac{16}{9}r^2 g_{FS} \\
g_\om= 2(u\cdot v)^{1/2}\ (dR^2 +R^2\ g_{FS}-\frac{4\ u\cdot v}{R^2}\xi^2) .
\end{gather*}
Note that by construction, the latter metric is invariant under the vector field $Y$ and thus, passes to the quotient $(\R^+\times \mathbb{C}P^3)/S^1$.
\item Observe that one can also view this construction as a $\mathbb{T}^2$-quotient of a $Spin(7)$-structure to a $6$-manifold endowed with an $SU(3)$-structure $(\om,\Om^+,\Om^-)$ given by
\[\Phi=\eta \w \xi \w \om + H^{3/2}\eta \w \Om^+ + \frac{1}{2}s^{4/3}H^2 \om \w \om - s^{4/3}H^{1/2}\xi \w \Om^-  \]
and the metrics are related by
\[ g_\Phi = s^{-2}\eta^2 + s^{2/3}H^{-2} \xi^2 + s^{2/3}Hg_\om. \]
This quotient construction under the assumption that the six-manifold is K\"ahler is currently work in progress by the author.
\end{itemize}
\end{Rem}

\bibliography{referencing2}
\bibliographystyle{plain}

\section{Appendix}
For the convenience of the reader and to make this article self-contained we describe the construction of the Bryant-Salamon metrics on the anti-self dual bundle of $S^4$. We shall follow the approach described in \cite{Salamon1989}. The reader will find proofs of the assertions made therein.

Consider $S^4$ embedded in $\R^5$ with coordinates $x_1,...,x_5$ we may choose the following local orthonormal frame
$$
v_1 = \frac{1}{R}
\begin{pmatrix}
x_2  \\
-x_1 \\
x_4 \\
-x_3  \\
0    
\end{pmatrix},\
v_2 = \frac{1}{{R}}
\begin{pmatrix}
-x_3  \\
x_4 \\
x_1 \\
-x_2  \\
0    
\end{pmatrix},\
v_3 = \frac{1}{R}
\begin{pmatrix}
x_4  \\
x_3 \\
x_2 \\
-x_1  \\
0    
\end{pmatrix},\
v_4 = \frac{1}{\sqrt{-1+\frac{1}{x_5^2}}}
\begin{pmatrix}
-x_1  \\
-x_2 \\
-x_3 \\
-x_4  \\
-x_5+\frac{1}{x_5}    
\end{pmatrix},
$$
where $R^2=x_1^2+x_2^2+x_3^2+x_4^2$. Denoting by $e^i$ the corresponding coframe we compute the following
\begin{gather*}
de^1=\frac{2}{R}e^{23}+\frac{\sqrt{1-R^2}}{R}e^{14}\\
de^2=\frac{2}{R}e^{31}+\frac{\sqrt{1-R^2}}{R}e^{24}\\
de^3=\frac{2}{R}e^{12}+\frac{\sqrt{1-R^2}}{R}e^{34}\\
de^4=0
\end{gather*}
In the Cartan moving frame setting the structure equations are given by $d\mathbf{e}=-\om\w \mathbf{e}$ and $F=d\om+\om\w\om \in \Lm^2\otimes \mathfrak{so}(4)$ where $\om$ is the Levi-Civita connection form and $F$ the curvature. We compute them as 
$$\om=-
\begin{pmatrix}
0 & -\frac{1}{R}e^3 & \frac{1}{R}e^2 &\frac{\sqrt{1-R^2}}{R} e^1 \\
\cdot & 0 & -\frac{1}{R}e^1 & \frac{\sqrt{1-R^2}}{R}e^2  \\
\cdot & \cdot & 0 & \frac{\sqrt{1-R^2}}{R}e^3  \\
\cdot & \cdot & \cdot & 0    
\end{pmatrix}\  \text{ and \ }
F=
\begin{pmatrix}
0 & e^{12} & e^{13} & e^{14} \\
\cdot & 0 & e^{23} & e^{24}  \\
\cdot & \cdot & 0 & e^{34}  \\
\cdot & \cdot & \cdot & 0   
\end{pmatrix}
$$
Here we are only writing the upper triangular entries since the matrices are skew-symmetric. The second equation confirms that the round metric has constant curvature and that the scalar curvature is $12$. We can define a local orthonormal basis of the anti-self dual bundle by $c^1:=e^{12}-e^{34}$, $c^2:=e^{13}-e^{42}$ and $c^3:=e^{14}-e^{23}$. $\om$ induces a connection on this bundle given by
$$\nabla c^i=\psi^i_j\otimes c^i$$Since the connection is torsion free we can compute $\psi^i_j$ by
\begin{gather*}
dc^1=\psi^1_2\w c^2+\psi^1_3\w c^3 \\
dc^2=\psi^2_1\w c^1+\psi^2_3\w c^3\\
dc^3=\psi^3_1\w c^1+\psi^3_2\w c^2
\end{gather*}
where $\psi^2_1=\frac{\sqrt{1-R^2}+1}{R}e^1$, $\psi^1_3=\frac{\sqrt{1-R^2}+1}{R}e^2$, $\psi^2_3=\frac{\sqrt{1-R^2}+1}{R}e^3$ and $\psi^i_j=-\psi^j_i.$
These forms can all be pulled back to the total space of the ASD bundle which we denote by the same letter. We introduce fibre coordinates $(a_1,a_2,a_3)$ with respect to the coordinate system defined by $c^i.$ We can define vertical $1$-forms by
\[b^i=da_i+a_j\psi^j_i \]
i.e. they vanish on horizontal vectors. Together with the pull back of the $e^i$ they give an absolute parallelism of the ASD bundle. The following forms are all $SO(4)$-invariant and are hence globally well-defined on the total space:
\begin{gather*}
\rho=a_1a_1+a_2a_2+a_3a_3\\
\sigma=2\ (a_1b^2b^3+a_2b^3b^1+a_3b^1b^2) \\
\alpha=a_1b^2c^3+a_2b^3c^1+a_3b^1c^2-a_1b^3c^2-a_2b^1c^3-a_3b^2c^1\\
\tau=a_1c^1+a_2c^2+a_3c^3\\
\beta=6\ b^{123}.
\end{gather*}
The unit $(\rho=1)$ sphere bundle is diffeomorphic to $\C P^3$ and restricting the above forms we have
\begin{gather*}
g_{FS}=\frac{1}{2}((e^1)^2+(e^2)^2+(e^3)^2+(e^4)^2)+\frac{1}{2}((b^1)^2+(b^2)^2+(b^3)^2)\Big|_{S^2}\\
\om_{FS}=\frac{1}{2}\tau -\frac{1}{4}\sigma \\
g_{NK}=\frac{1}{2}((e^1)^2+(e^2)^2+(e^3)^2+(e^4)^2)+\frac{1}{4}((b^1)^2+(b^2)^2+(b^3)^2)\Big|_{S^2}\\
\om_{NK}=\frac{1}{2}\tau+\frac{1}{8}\sigma\\
\Om_{NK}=\frac{1}{4}(d\tau - i \alpha)
\end{gather*}
The subscript $FS$ refers to the Fubini-Study metric and $NK$ to the Nearly-K\"ahler one. Our choice of scaling was made to fit the conventions of section \ref{bsquotient}. The Bryant-Salamon form is then given by $$\vp_{BS}= u^2 v d\tau + \frac{1}{6}v^3 \beta$$ where 
$u=(2 \rho+1)^{1/4}$ and $v= (2\rho +1)^{-1/4}$.
\end{document}